\newtheorem{theorem}{Theorem}[section]
\newtheorem{corollary}[theorem]{Corollary}
\newtheorem{lemma}[theorem]{Lemma}
\newtheorem{proposition}[theorem]{Proposition}
\newtheorem{definition}[theorem]{Definition}
\newtheorem{remark}[theorem]{Remark}
\newtheorem{example}[theorem]{Example}
\newtheorem{conjecture}[theorem]{Conjecture}
\def\proofsymbol{\rule{0.5em}{0.5em}}
\theoremstyle{nonumberplain}
\newtheorem{proof}{Proof}
\theoremstyle{empty}
\numberwithin{equation}{section} 
\def\A{\text{\rm vN}(G_{\alpha,q}(H_\R))}
\def\C{{\mathbb C}}
\def\R{{\mathbb R}}
\def\N{{\mathbb N}}
\def\Z{{\mathbb Z}}
\def\d{{\rm d}}
\def\6{\, {\rm d}}
\def\i{{\rm i}}
\def\ri{{\rm i}}
\def\B{b_{\alpha,q}}
\def\G{G_{\alpha,q}}
\def\r{r}
\def\F{\mathcal{F}_{\rm fin}(H)}
\def\id{I}
\def\m{\mu_{\alpha,q,x}}
\def\qMP{{\rm MP}}
\def\P{\mathcal{P}}
\def\NC{\mathcal{NC}}
\def\Out{\rm Out}
\def\In{\rm In}
\def\PB{\mathcal{P}^B}
\def\Cr{\text{\normalfont Cr}}
\def\InS{\text{\normalfont CS}}
\def\InNB{\text{\normalfont CNB}}
\def\NB{\text{\normalfont NB}}
\def\SLNB{\text{\normalfont SLNB}}
\def\SL{\text{\normalfont SL}}
\def\Pair{\text{\normalfont Pair}}
\def\Sing{\text{\normalfont Sing}}
\renewcommand{\epsilon}{\varepsilon}
\def\Cov{\text{\normalfont Cov}}
\def\H{H}
\newcommand{\e}{{\epsilon}}
\begin{document}

\title{Fock space associated to Coxeter group of type B}

\author{Marek Bo\.zejko\footnote{Institute of Mathematics, University of Wroc\l aw,
Pl.\ Grunwaldzki 2/4, 50-384 Wroc law, Poland. Email: marek.bozejko@math.uni.wroc.pl}, %
Wiktor Ejsmont\footnote{Department of Mathematical Structure Theory (Math C), TU Graz Steyrergasse 30, 8010 Graz, Austria and Department of Mathematics and Cybernetics, 
Wroc\l aw University of Economics, ul.\ Komandorska 118/120, 53-345 Wroc\l aw, Poland. Email: wiktor.ejsmont@gmail.com} %
 and Takahiro Hasebe\footnote{Department of Mathematics, Hokkaido University, Kita 10, Nishi 8, Kita-ku, Sapporo 060-0810, Japan. Email: thasebe@math.sci.hokudai.ac.jp}}
\date{}
\maketitle

\begin{abstract}
In this article we construct a generalized Gaussian process coming from Coxeter groups of type B.  It is given by creation and annihilation operators on an $(\alpha,q)$-Fock space, which satisfy the commutation 
relation 
$$
\B(x)\B^\ast(y)-q\B^\ast(y)\B(x)=\langle x, y\rangle I+\alpha\langle \overline{x}, y \rangle q^{2N}, 
$$ 
where $x,y$ are elements of a complex Hilbert space with a self-adjoint involution $x\mapsto\bar{x}$ and $N$ is the number operator with respect to the grading on the $(\alpha,q)$-Fock space. We give an estimate of the norms of creation operators. We show that the distribution of the operators $\B(x)+\B^\ast(x)$ with
respect to the vacuum expectation becomes a generalized Gaussian distribution, in the
sense that all mixed moments can be calculated from the second moments with the help of
a combinatorial formula related with set partitions. Our generalized Gaussian distribution associates the orthogonal polynomials called the $q$-Meixner-Pollaczek polynomials, yielding the $q$-Hermite polynomials when $\alpha=0$ and free Meixner polynomials when $q=0$. 
\end{abstract}
\section{Introduction}
We present some new construction of a generalized Gaussian process related to Coxeter groups of type B.  Our construction generalizes 
Bo\.zejko and Speicher's $q$-Gaussian process \cite{BS91}  on the $q$-deformed Fock space 
$
\mathcal{F}^q(H)=(\mathbb{C}\Omega)\oplus\bigoplus_{n=1}^\infty \H^{\otimes n} $ where $\Omega$ denotes the vacuum vector and $\H$ the complexification of some real separable
Hilbert space $\H_\R$  
 on which the creation and its adjoint i.e.\ annihilation operators satisfies the $q$-commutation relation:
$$
a_q(x)a_q^\ast(y)-q a_q^\ast(y)a_q(x)=\langle x,y\rangle, \qquad x,y\in H
$$
where $q\in (-1,1)$. 
The inner product on $\mathcal{F}^q(H)$, called the $q$-deformed inner product, is the sesquilinear extension of 
\begin{align} \label{q-inner}
&\left\langle x_1\otimes\dots\otimes  x_m, y_1\otimes\dots\otimes
y_n\right\rangle_q = \delta_{m,n}\sum_{\sigma\in S(n)}q^{|\sigma|}\prod_{j=1}^n \langle x_j,y_{\sigma(j)}\rangle,
\end{align}
where  $S(n)$ is the set of all the permutations of $\{1,\dots,n\}$ and $|\sigma|:=\mbox{card}\{(i,j):i<j,
\sigma(i)>\sigma(j)\}$ is the number of inversions of  $\sigma\in
S_n$. 


The study of $q$-Gaussian  distributions has been an active field of research during the last decade. 
 A noncommutative analog of a Brownian motion (or Gaussian process, more generally) is the
family of operators, $(a_q^*(x)+a_q(x))_{x\in\H}$. When equipped with the vacuum expectation state $\langle\Omega,\cdot\, \Omega\rangle_q $ the
$q$-Gaussian algebra yields a rich non-commutative probability space. For $q=1$ (corresponding to the Bose statistics)  the $q$-deformed
 operator $ a_q^*(x)+a_q(x)$ is a natural deformation of the  standard Gaussian  random variable i.e.\  its spectral measure relative to the vacuum state satisfies 
$$\langle (a_1^*(x)+a_1(x))^n \Omega,\Omega\rangle_1=\frac{1}{\sqrt{2\pi}}\int_{\mathbb{R}}t^n e^{-\frac{t^2}{2}}\,dt
$$
when $\|x\|=1$. 
 Moreover $\{a_1^*(x)+a_1(x)\}_{x\in H}$ are commutative in the classical sense. 
The  case $q=-1$ corresponds to the Fermi statistics. It should be stressed that, for $q\neq \pm 1$, the $q$-modification of the
(anti) symmetrization operator is a strictly positive operator. Therefore, unlike the
classical Bose and Fermi cases, there are no commutation relations between the creation
operators. For $q = 0$, the $q$-Fock
space recovers the full Fock space of Voiculescu's free probability \cite{V85,V1}. For $q = 0$ the $q$-Gaussian random variables are distributed according to the semi-circle law 
$$
\langle (a_0^*(x)+a_0(x))^n \Omega,\Omega\rangle_0=\frac{1}{2\pi}\int_{-2}^2 t^n\sqrt{4-t^2}\, dt 
$$
when $\|x\|=1$. 

 The study of the noncommutative Brownian motion $(a_q^\ast(x)+a_q(x))_{x\in H}$ was initiated in \cite{BS91,BKS97,BS96}. For further generalizations of
a noncommutative Brownian motion, see \cite{BLW12,Bozejko-Yoshida06,BG02,GM02a,GM02b,Kula}. 
 In particular, this setting
gives rise to $q$-deformed versions of the  stochastic integrals \cite{Anshelevich,BKS97,Donati-Martin,Sniady}, 
with recent extensions to fourth moment convergence theorem   \cite{ASN}. It is worthwhile to mention the work of Bryc
\cite{B}, where the Laha-Lukacs property for
$q$-Gaussian processes was shown. Bryc proved
that classical processes corresponding to operators which satisfy the 
$q$-commutation relations have linear regressions and quadratic conditional variances.
After \cite{BS91}, a series of papers \cite{Biane1,B12,BS94,BE,Krolak,Speicher} appeared, which studied discrete generalizations of the $q$-commutation relations. 

Blitvi\'c \cite{B12} introduced a second-parameter refinement of the $q$-Fock space,
formulated as a $(q,t)$-Fock space $\mathcal{F}_{q,t} (\H )$. It is constructed via a direct generalization of
Bo\.zejko and Speicher's framework \cite{BS91}, yielding the $q$-Fock space when $t = 1$.  The corresponding creation and annihilation operators now satisfy the commutation
relation 
$$a_{q,t}(x)a_{q,t}(y)^*-q a_{q,t}^\ast(y) a_{q,t}(x)=\langle x,y\rangle t^N, $$ 
where $N$ is the number operator with respect to the grading on $\mathcal{F}_{q,t} (\H )$. These are
the defining relations of the Chakrabarti-Jagannathan deformed quantum oscillator
algebra, see \cite{B12} and references therein for more details. The moments of
the deformed Gaussian process $(a_{q,t}(x) +a_{q,t}^\ast(x))_{x\in H}$ are encoded by the joint statistics of crossings
and nestings in pair partitions. In particular, it is shown that the distribution of a single Gaussian operator
is orthogonalized by the $(q, t)$-Hermite polynomials. 

Another generalization of the CCR and CAR was proposed in 2013 by Bryc and Ejsmont \cite{BE}. They  define a pair of
non-commutative processes on a perturbed Fock space with three parameters. Both processes
have the same univariate distributions, and satisfy a weak form of the
polynomial martingale property. The processes give two non-equivalent Fock-space realizations of the same classical Markov process: the two-parameter bi-Poisson  processes introduced in \cite{BMW1}, and constructed in \cite{BMW2}.

The goal of this paper is to introduce an $(\alpha,q)$-Gaussian process on an $(\alpha,q)$-Fock space. Our strategy is to replace the Coxeter group of type A i.e.\ the permutation group appearing in the sum \eqref{q-inner} by the Coxeter group of type B. The Coxeter group of type B can be written as $\Z_2^n \rtimes S(n) $ and hence contains the permutation group as a subgroup. The parameter $\alpha$ corresponds to the $\Z_2$ part and the parameter $q$ corresponds to $S(n)$ part. Thus our $(\alpha,q)$-Fock space gives the $q$-Fock space when $\alpha=0$, and turns out to give the $t$-free probability space \cite{BW01,W07} when $q=0$ and $\alpha=\frac{1-t}{t}$ (where $t\in (0,\infty))$. 
It would be worth mentioning that free probabilistic considerations of type B first appeared in a paper by Biane, Goodman and Nica in \cite{BGN}. Recently, connections between type B and infinitesimal free probability were put into evidence in \cite{BS,H,FN,F12}.  

The commutation relation satisfied by the creation and annihilation operators on the $(\alpha,q)$-Fock space reads 
\begin{equation}
\B(x)\B^\ast(y)-q\B^\ast(y)\B(x)=\langle x, y\rangle I+\alpha\langle  \overline{x}, y\rangle q^{2N}, 
\end{equation}
where $N$ is the number operator. The orthogonal polynomials arising in the present framework are called $q$-Meixner-Pollaczek polynomials satisfying the recurrence relation
\begin{equation}\label{recursion0}
t P_n^{(\alpha, q)}(t) = P_{n+1}^{(\alpha, q)}(t) +[n]_q(1 + \alpha q^{n-1})P_{n-1}^{(\alpha, q)}(t), \qquad n=0,1,2,\dots
\end{equation}
where $P_{-1}^{(\alpha, q)}(t)=0,P_0^{(\alpha, q)}(t)=1$. When $\alpha=0$ then we get $q$-Hermite orthogonal polynomials and when $q=0$ then we get the orthogonal polynomials associated to a symmetric free Meixner distribution.
The moments of the Gaussian process are given by
\begin{equation}\label{mixed moment}
\begin{split}
&\langle\Omega, (\B(x_1)+\B^\ast(x_1))\cdots (\B(x_{2n})+\B^\ast(x_{2n}))\Omega\rangle_{\alpha,q}\\
&\qquad\qquad\qquad=
\sum_{\pi\in \P_{2}(2 n)} q^{\Cr(\pi)} \prod_{\substack{\{i,j\} \in \pi} }\left(\langle x_i,x_j\rangle + \alpha q^{2 \Cov(\{i,j\})}\langle x_i,\overline{x_j}\rangle\right), 
\end{split}
\end{equation}
where $\P_2(2 n)$ is the set of pair partitions, $\Cr(\pi)$ denotes the number of the crossings of $\pi$ and $\Cov(\{i,j\})$ is the number of blocks of $\pi$ which covers $\{i,j\}$. 
This formula recovers the $q$ case in \cite{BS91} when $\alpha=0$. We also give an alternative expression of \eqref{mixed moment} in terms of pair partitions of type B.

The plan of the paper is following: first we present definitions and remarks on the $(\alpha,q)$-Fock space and creation, annihilation operators. Next, natural properties of creation and annihilation operators, including norm estimates and the commutation relation, are presented in the Section 2.2. The generalized Gaussian process of type B, $\G(x)=\B(x)+\B^\ast(x), x\in H$,  is studied in the Section 3. The main theorem is placed in the Section 3. We show the recurrence relation \eqref{recursion0},  give a direct Wick formula for $\B^{\e(n)}(x_n)\cdots \B^{\e(1)}(x_1)\Omega$  and then give the mixed moment formula \eqref{mixed moment}. 

\section{Fock space and creation, annihilation operators of type B}
\subsection{Definitions}
Let $\Sigma(n)$ be the set of bijections $\sigma$ of the $2n$ points $\{\pm1,\cdots, \pm n\}$ such that 
$\sigma(-k)=-\sigma(k), k=1,\dots,n$. Equipped with the composition operation as a product, $\Sigma(n)$ becomes a group and is called a \emph{Coxeter group of type B} or a \emph{hyperoctahedral group}. 
The Coxeter group $\Sigma(n)$ is generated by $\pi_0=(1,-1), \pi_i =(i,i+1)$, $i=1,\dots,n-1$. These generators satisfy the generalized braid relations $\pi_i^2=e, 0\leq i \leq n-1$,  $(\pi_0\pi_1)^4=(\pi_i \pi_{i+1})^3=e, 1\leq i < n-1$, $(\pi_i \pi_j)^2=e$ if $|i-j|\geq2, 0\leq i,j\leq n-1$.  Note that $\{\pi_i\mid i=1,\dots,n\}$ generates the symmetric group $S(n)$. 

We express $\sigma \in \Sigma(n)$ in an irreducible form
$$
\sigma=\pi_{i_1} \cdots \pi_{i_{k}}, \qquad 0\leq i_1,\dots,i_k \leq n-1,
$$
i.e., in a form with minimal length, and in this case let 
 \begin{align}
&l_1(\sigma)= \text{The number of $\pi_0$ appearing in $\sigma$}, \\
&l_2(\sigma) =  \text{The number of $\pi_i, 1 \leq i \leq n-1$, appearing in $\sigma$}. 
\end{align}
These definitions do not depend on the way we express $\sigma$ in an irreducible form, and hence $l_1(\sigma)$ and $l_2(\sigma)$ are well defined. 

Let $H_\R$ be a separable real Hilbert space and let $H$ be its complexification with inner product $\langle\cdot,\cdot\rangle$ linear on the right component and anti-linear on the left. 
When considering elements in $H_\R$, it holds true that $\langle x,y\rangle=\langle y,x\rangle$. In order to define an action of $\pi_0$, we assume that there exists a self-adjoint involution $x\mapsto \bar{x}$ for $x\in H$. 
For example we may take the identity as an involution, or if $H$ is spanned by an orthonormal basis $(e_i)_{i \in \{\pm 1,\dots, \pm n\}}$ (or $(e_i)_{i \in \mathbb{Z}\setminus\{0\}}$), then 
we may define the involution
$$
\overline{e_{i}}=e_{-i}, \qquad i \in \{\pm1,\dots,\pm n\}. 
$$

Given a self-adjoint involution on $H$ we define an action of $\Sigma(n)$ on $H^{\otimes n}$ by 
\begin{align}
&\pi_i(x_1\otimes\cdots \otimes x_n) = x_1 \otimes \cdots \otimes x_{i-1} \otimes x_{i+1} \otimes x_{i} \otimes x_{i+2}\otimes \cdots \otimes x_{n},& n \geq2,  \label{Coxeter1} \\
&\pi_0(x_1\otimes\cdots \otimes x_n)= \overline{x_1}\otimes x_2\otimes\cdots \otimes x_n,& n \geq 1. 
\label{Coxeter2}
\end{align}

Let $\F$ be the (algebraic) full Fock space over $\H$
\begin{equation}
\F:= \bigoplus_{n=0}^\infty H^{\otimes n}
\end{equation} 
with convention that $H^{\otimes 0}=\C \Omega$ is a one-dimensional space normed space along a unit vector $\Omega$. Note that elements of $\F$ are finite linear combinations of the elements from $H^{\otimes n}, n\in \N\cup\{0\}$ and we do not take the completion. 
We equip $\F$ with the inner product  
$$
\langle x_1 \otimes \cdots \otimes x_m, y_1 \otimes \cdots \otimes y_n\rangle_{0,0}:= \delta_{m,n}\prod_{i=1}^n \langle x_i, y_i\rangle.  
$$

We will deform the inner product on $\F$. 
For $\alpha, q \in[-1,1]$ we define the type B symmetrization operator on $H^{\otimes n}$, 
 \begin{align}
&P_{\alpha,q}^{(n)}= \sum_{\sigma \in \Sigma(n)} \alpha^{l_1(\sigma)} q^{l_2(\sigma)}\, \sigma,\qquad n \geq1, \\ 
&P_{\alpha,q}^{(0)}= \id_{H^{\otimes 0}}. 
\end{align}
Note that with convention $0^0=1$, we have $P_{0,0}^{(n)}=\id_{\H^{\otimes n}}$ and note also that $P_{0,q}^{(n)}$ is the $q$-symmetrization operator \cite{BS91}. 
Moreover let 
$$
P_{\alpha,q}=\bigoplus_{n=0}^\infty P_{\alpha,q}^{(n)}
$$ be the type B symmetrization operator acting on the algebraic full Fock space. 
From Bo\.zejko and Speicher \cite[Theorem 2.1]{BS94}, the operator $P_{\alpha,q}^{(n)}$ and hence $P_{\alpha,q}$ is positive. If $|\alpha|, |q|<1$ then $P_{\alpha,q}^{(n)}$ is a strictly positive operator meaning that it is positive and $\text{Ker}(P_{\alpha,q}^{(n)})=\{0\}$. 

We deform the inner product by using the type B symmetrization operator: 
\begin{equation}
\langle x_1 \otimes \cdots \otimes x_m, y_1 \otimes \cdots \otimes y_n\rangle_{\alpha,q}:=\langle x_1 \otimes \cdots \otimes x_m, P_{\alpha,q}^{(n)}(y_1 \otimes \cdots \otimes y_n)\rangle_{0,0}, 
\end{equation}
which is a semi-inner product from the positivity of $P_{\alpha,q}$ for $\alpha,q\in[-1,1]$. We restrict the parameters to the case $\alpha,q \in(-1,1)$ so that the deformed semi-inner product is an inner product.

For $x\in H$ the free right creation and free annihilation operators $\r^\ast(x), \r(x)$ on $\F$  are defined by 
\begin{align}
&\r^\ast(x)(x_1 \otimes \cdots \otimes x_n):= x_1 \otimes \cdots \otimes x_n \otimes x, &&n\geq 1\\
&\r^\ast(x)\Omega=x, \\
&\r(x)(x_1 \otimes \cdots \otimes x_n):=\langle x, x_n\rangle\, x_1 \otimes \cdots \otimes x_{n-1},&&n\geq2, \\ 
&\r(x)x_1:= \langle x,x_1 \rangle\,\Omega, \\ 
&\r(x)\Omega =0. 
\end{align}
It then holds that $\r^\ast(x)^\ast = \r(x)$ and 
$\r^\ast: H \to \mathbb{B}(\F)$ is linear, but  $\r: H \to \mathbb{B}(\F)$ is anti-linear. 

\begin{remark}
In the literature the left creation and annihilation operators are commonly used, but we consider the right operators. 
The right creation and annihilation operators have the advantage that the embedding $\Sigma(n-1)\subset \Sigma(n)$ looks more natural (see Section \ref{Sec3}) and the commutation relation 
\begin{equation}
\r(x) (P^{(n-1)}_{\alpha,q}\otimes I) = P^{(n-1)}_{\alpha,q}\r(x)
\end{equation}
holds true on $H^{\otimes n}$ for any $x\in H$. If we consider the right action of Coxeter groups then the left creation and annihilation operators are more natural. Note that recently the combination of right and left creation and annihilation operators is of interest in a different context of free probability \cite{V2,V3}. 
\end{remark}

\begin{definition} For $\alpha,q\in(-1,1)$, the algebraic full Fock space $\F$ equipped with the inner product $\langle\cdot,\cdot \rangle_{\alpha,q}$ is called the \emph{Fock space of type B} or the \emph{$(\alpha,q)$-Fock space}. 
Let $\B^\ast(x):= \r^\ast(x)$ and $\B(x)$ be its adjoint with respect to the inner product $\langle\cdot,\cdot \rangle_{\alpha,q}$. The operators $\B^\ast(x)$ and $\B(x)$ are called \emph{creation and annihilation operators of type B} or \emph{$(\alpha,q)$-creation and annihilation operators}, respectively.  
\end{definition}
It is easy to see that $\B^\ast: H \to \mathbb{B}(\F)$ is linear and $\B: H \to \mathbb{B}(\F)$ is anti-linear. 
Since $P_{0,q}$ is the $q$-symmetrization, our $(\alpha,q)$-Fock space is the $q$-Fock space when $\alpha=0$. Thus our type B setting actually generalizes the type A setting. 


\subsection{Properties of creation and annihilation operators}\label{Sec3}
There is a natural embedding $\Sigma(n-1)=\langle \pi_0, \dots, \pi_{n-2}\rangle \subset \Sigma(n)=\langle \pi_0, \dots, \pi_{n-1}\rangle$. Corresponding to the quotient $\Sigma(n-1) \backslash\Sigma(n)$, we  decompose the operator $P^{(n)}_{\alpha,q}$. 
\begin{proposition}\label{prop1}
We have the decomposition 
\begin{equation}\label{decomposition}
P^{(n)}_{\alpha,q}=(P^{(n-1)}_{\alpha,q}\otimes I)R^{(n)}_{\alpha,q} \text{~on $H^{\otimes n}$}, \qquad n\geq 1, 
\end{equation}
where
\begin{equation}
R^{(n)}_{\alpha,q} = 1+\sum_{k=1}^{n-1}q^{k}\pi_{n-1}\cdots \pi_{n-k} + \alpha q^{n-1}\pi_{n-1} \pi_{n-2} \cdots \pi_{1}\pi_0\left(1+\sum_{k=1}^{n-1}q^{k}\pi_{1}\cdots \pi_{k}\right). 
\end{equation}
\end{proposition}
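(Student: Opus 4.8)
We follow the pattern of the type A recursion in \cite{BS91}: the plan is to decompose $P^{(n)}_{\alpha,q}$ along the right cosets of the standard parabolic subgroup $\Sigma(n-1)=\langle \pi_0,\dots,\pi_{n-2}\rangle$ in $\Sigma(n)$. First I would set up a transversal for $\Sigma(n-1)\backslash \Sigma(n)$. Since $\Sigma(n-1)=\{\tau\in\Sigma(n):\tau(n)=n\}$, the right coset $\Sigma(n-1)\sigma$ is completely determined by the value $\sigma^{-1}(n)\in\{\pm 1,\dots,\pm n\}$, so there are exactly $2n=[\Sigma(n):\Sigma(n-1)]$ right cosets. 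The $2n$ group elements occurring in $R^{(n)}_{\alpha,q}$ --- the identity $e$, the descending words $\pi_{n-1}\pi_{n-2}\cdots \pi_{n-k}$ for $1\le k\le n-1$, the word $\pi_{n-1}\cdots\pi_1\pi_0$, and the words $\pi_{n-1}\cdots\pi_1\pi_0\pi_1\cdots\pi_k$ for $1\le k\le n-1$ --- have pairwise distinct values of $\sigma^{-1}(n)$, which one reads off by locating $\pm n$, with its sign, in the window $[\sigma(1),\dots,\sigma(n)]$; hence they form a complete set of right coset representatives.

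The heart of the argument is to show that each of these words is reduced and represents the \emph{minimal}-length element of its coset; equivalently, that none of $\pi_0,\dots,\pi_{n-2}$ is a left descent of it. Granting this, the standard theory of minimal coset representatives for parabolic subgroups of Coxeter groups yields, for every $\sigma\in\Sigma(n)$, a unique factorization $\sigma=\tau w'$ with $\tau\in\Sigma(n-1)$ and $w'$ in the above list, such that concatenating a reduced word for $\tau$ with the displayed reduced word for $w'$ produces a reduced word for $\sigma$. Since $l_1(\cdot)$ and $l_2(\cdot)$ count, in any reduced expression, the number of factors equal to $\pi_0$ and the number of factors among $\pi_1,\dots,\pi_{n-1}$ respectively (these counts being well defined), the concatenation forces
\[
l_1(\sigma)=l_1(\tau)+l_1(w'),\qquad l_2(\sigma)=l_2(\tau)+l_2(w').
\]
To check reducedness and minimality one may invoke the general theory, or argue directly: a length count (via the type B length formula, or iterated braid moves) shows that the word lengths of the three families are $k$, $n$, and $n+k$, which equals $l_1+l_2$ in each case and hence confirms that the words are reduced; and the absence of left descents in $\{\pi_0,\dots,\pi_{n-2}\}$ is visible from the explicit windows found in the first step.

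It remains to assemble the operators. Because $\sigma\mapsto\sigma$ is a group homomorphism into the operators on $H^{\otimes n}$, the two preceding steps give
\[
P^{(n)}_{\alpha,q}=\sum_{\sigma\in\Sigma(n)}\alpha^{l_1(\sigma)}q^{l_2(\sigma)}\,\sigma
=\Bigl(\sum_{\tau\in\Sigma(n-1)}\alpha^{l_1(\tau)}q^{l_2(\tau)}\,\tau\Bigr)\Bigl(\sum_{w'}\alpha^{l_1(w')}q^{l_2(w')}\,w'\Bigr),
\]
the sum over $w'$ running through the transversal. Each $\tau\in\Sigma(n-1)$ acts on $H^{\otimes n}$ only through the first $n-1$ tensor legs, and $l_1(\tau),l_2(\tau)$ coincide whether computed in $\Sigma(n-1)$ or in $\Sigma(n)$ (the parabolic is standard), so the first factor equals $P^{(n-1)}_{\alpha,q}\otimes I$. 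In the second factor the pairs $(l_1(w'),l_2(w'))$ are $(0,k)$, $(1,n-1)$, and $(1,n-1+k)$ for the three families, and substituting these reproduces exactly the displayed expression for $R^{(n)}_{\alpha,q}$; this proves \eqref{decomposition}.

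The only substantive step is the second one: ruling out length cancellation in the product $\tau w'$ --- equivalently, verifying that the chosen representatives are precisely the minimal-length coset representatives. Everything else is bookkeeping with the action of $\Sigma(n)$ on the tensor legs.
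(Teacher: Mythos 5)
Your proposal is correct and follows essentially the same route as the paper: decompose $\Sigma(n)$ along right cosets of the parabolic subgroup $\Sigma(n-1)$, identify the $2n$ words appearing in $R^{(n)}_{\alpha,q}$ as the minimal-length coset representatives, and use the resulting additivity $l_i(\sigma)=l_i(\tau)+l_i(w')$ to factor the sum. The only difference is that where you sketch a direct verification that these words are reduced and minimal in their cosets, the paper simply quotes Stumbo's explicit description of the minimal coset representatives \cite{S00} together with the general parabolic factorization from \cite[Section 1.10]{Hum90}.
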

\begin{proof} 
It is known that there exist unique right coset representatives for $\Sigma(n-1) \backslash\Sigma(n)$ with minimal lengths \cite[Section 1.10, Proposition]{Hum90}. Stumbo \cite{S00} showed that these coset representatives are given by 
$
\{w(k) \mid 0 \leq k \leq 2n-1\}, 
$
where $w(k)$ is the substring composed by the first $k$ elements of $\pi_{n-1} \cdots \pi_1 \pi_0 \pi_1 \cdots \pi_{n-1}$. Hence any element $\sigma \in \Sigma(n)$ has the representation $\sigma=\sigma' w(k)$ for some unique $0\leq k \leq 2n-1$ and $\sigma'\in \Sigma(n-1)$ and moreover, this representation preserves the irreducibility (see \cite[Section 1.10, Proposition]{Hum90}) and hence $l_i(\sigma)=l_i(\sigma')+l_i(w(k))$ for $i=1,2$ . Since 
$R_{\alpha,q}^{(n)}$ has the representation 
$$
R^{(n)}_{\alpha,q} = \sum_{k=0}^{2 n-1} \alpha^{l_1(w(k))}q^{l_2(w(k))} w(k), 
$$
we have the identity \eqref{decomposition}. 
\end{proof}
The operator $R_{\alpha,q}^{(n)}$ plays a central role in this paper. Firstly we can compute the annihilation operator in terms of $R_{\alpha,q}^{(n)}$. 
\begin{proposition}\label{prop2} For $n \geq1$, we have 
\begin{equation}
\B(x)=\r(x) R^{(n)}_{\alpha,q} \text{~on $H^{\otimes n}$}. 
\end{equation}
\end{proposition}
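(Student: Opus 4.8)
The plan is to verify the claimed formula by pairing both sides against arbitrary test vectors and invoking the non-degeneracy of $\langle\cdot,\cdot\rangle_{\alpha,q}$. Since $\B^\ast(x)=\r^\ast(x)$ maps $H^{\otimes(n-1)}$ into $H^{\otimes n}$, raising the degree by one, its $\langle\cdot,\cdot\rangle_{\alpha,q}$-adjoint $\B(x)$ maps $H^{\otimes n}$ into $H^{\otimes(n-1)}$; and $\r(x)R^{(n)}_{\alpha,q}$ does the same, because $R^{(n)}_{\alpha,q}$ preserves $H^{\otimes n}$ while $\r(x)$ lowers the degree by one. Hence it suffices to prove
\begin{equation}\label{eq:prop2-test}
\langle \eta,\B(x)\xi\rangle_{\alpha,q}=\langle \eta,\r(x)R^{(n)}_{\alpha,q}\xi\rangle_{\alpha,q}\qquad\text{for all }\xi\in H^{\otimes n},\ \eta\in H^{\otimes(n-1)},
\end{equation}
since $\langle\cdot,\cdot\rangle_{\alpha,q}$ is a genuine inner product on $H^{\otimes(n-1)}$ when $\alpha,q\in(-1,1)$.

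For the left-hand side I would unfold the definition of the adjoint and of the deformed inner product:
\begin{equation*}
\langle \eta,\B(x)\xi\rangle_{\alpha,q}=\langle \r^\ast(x)\eta,\xi\rangle_{\alpha,q}=\langle \eta\otimes x,\xi\rangle_{\alpha,q}=\langle \eta\otimes x,P^{(n)}_{\alpha,q}\xi\rangle_{0,0}.
\end{equation*}
Then I would substitute the decomposition $P^{(n)}_{\alpha,q}=(P^{(n-1)}_{\alpha,q}\otimes I)R^{(n)}_{\alpha,q}$ from Proposition~\ref{prop1}, use that $\r(x)$ is the $\langle\cdot,\cdot\rangle_{0,0}$-adjoint of $\r^\ast(x)$ to move $\r^\ast(x)$ off of $\eta\otimes x=\r^\ast(x)\eta$, and finally apply the commutation relation $\r(x)(P^{(n-1)}_{\alpha,q}\otimes I)=P^{(n-1)}_{\alpha,q}\r(x)$ recorded in the Remark above:
\begin{equation*}
\begin{split}
\langle \eta\otimes x,(P^{(n-1)}_{\alpha,q}\otimes I)R^{(n)}_{\alpha,q}\xi\rangle_{0,0}
&=\langle \eta,\r(x)(P^{(n-1)}_{\alpha,q}\otimes I)R^{(n)}_{\alpha,q}\xi\rangle_{0,0}\\
&=\langle \eta,P^{(n-1)}_{\alpha,q}\r(x)R^{(n)}_{\alpha,q}\xi\rangle_{0,0}.
\end{split}
\end{equation*}
Since $\r(x)R^{(n)}_{\alpha,q}\xi$ lies in $H^{\otimes(n-1)}$, the last line is by definition of the deformed inner product equal to $\langle \eta,\r(x)R^{(n)}_{\alpha,q}\xi\rangle_{\alpha,q}$, which is exactly \eqref{eq:prop2-test}. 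The value $n=1$ is covered as well, since $P^{(0)}_{\alpha,q}=I$ and $\langle\cdot,\cdot\rangle_{\alpha,q}$ on $\C\Omega$ coincides with the standard inner product.

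I do not expect a genuine obstacle here: once Proposition~\ref{prop1} is available, the argument is pure bookkeeping, and the only thing needing attention is keeping straight which of the two inner products $\langle\cdot,\cdot\rangle_{0,0}$ and $\langle\cdot,\cdot\rangle_{\alpha,q}$ is in use at each step, and on which side the symmetrization operator sits. As a variant one could bypass the Remark's commutation relation and instead pass $P^{(n-1)}_{\alpha,q}\otimes I$ directly onto $\eta\otimes x$, using that $P^{(n-1)}_{\alpha,q}$ is self-adjoint with respect to $\langle\cdot,\cdot\rangle_{0,0}$ — a consequence of the invariance of $l_1$ and $l_2$ under inversion in $\Sigma(n-1)$.
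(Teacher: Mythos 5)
Your proposal is correct and follows essentially the same route as the paper: pair against a test vector in $H^{\otimes(n-1)}$, unfold the $\langle\cdot,\cdot\rangle_{\alpha,q}$-adjoint, insert the decomposition $P^{(n)}_{\alpha,q}=(P^{(n-1)}_{\alpha,q}\otimes I)R^{(n)}_{\alpha,q}$ from Proposition~\ref{prop1}, move $\r^\ast(x)$ across the $\langle\cdot,\cdot\rangle_{0,0}$ pairing, and apply the commutation relation $\r(x)(P^{(n-1)}_{\alpha,q}\otimes I)=P^{(n-1)}_{\alpha,q}\r(x)$. The only additions are your explicit appeal to non-degeneracy of the deformed inner product and the remark on $n=1$, both of which the paper leaves implicit.
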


\begin{proof}
Let $f \in H^{\otimes (n-1)},g \in H^{\otimes n}$. Then 
\begin{equation}
\begin{split}
\langle f, \B(x)g \rangle_{\alpha,q} 
&=\langle \B^\ast(x)f, g \rangle_{\alpha,q} =\langle \r^\ast(x) f, g \rangle_{\alpha,q}=\langle\r^\ast(x) f,  P^{(n)}_{\alpha,q} g \rangle_{0,0}  \\
&=\langle \r^\ast(x) f,   (P^{(n-1)}_{\alpha,q}\otimes I) R^{(n)}_{\alpha,q}g \rangle_{0,0} = \langle  f,  \r(x)  (P^{(n-1)}_{\alpha,q}\otimes I) R^{(n)}_{\alpha,q}g \rangle_{0,0}. 
\end{split}
\end{equation}
Recall that  $\r(x) (P^{(n-1)}_{\alpha,q}\otimes I) h = P^{(n-1)}_{\alpha,q}\r(x)h$ for $h\in H^{\otimes n}$ and so we get  
\begin{equation}
\begin{split}
 \langle  f,  \r(x)  (P^{(n-1)}_{\alpha,q}\otimes I) R^{(n)}_{\alpha,q}g \rangle_{0,0} 
&=  \langle  f,  P^{(n-1)}_{\alpha,q} \r(x)R^{(n)}_{\alpha,q}g \rangle_{0,0}  \\
&=  \langle  f, \r(x)R^{(n)}_{\alpha,q} g \rangle_{\alpha,q}. 
\end{split}
\end{equation}
\end{proof}
\begin{theorem}\label{thm1}
Let $N$ be the number operator, i.e.\ $N(f)=n f$ for $f\in H^{\otimes n}, n\in \N\cup\{0\}$. Then 
$$
\B(x)= \r_q(x)+ \alpha  \ell_{q}(\bar{x})q^{N-1}, \qquad x \in H, 
$$ 
where 
\begin{align}
\r_q(x)(x_1\otimes \cdots \otimes x_n)= \sum_{k=1}^n q^{n-k} \langle x, x_k \rangle\, x_1\otimes \cdots \otimes \check{x}_k \otimes \cdots \otimes x_n, \label{rq}\\
\ell_q(x)(x_1\otimes \cdots \otimes x_n)= \sum_{k=1}^n q^{k-1} \langle x,x_k\rangle\, x_1\otimes \cdots \otimes \check{x}_k \otimes \cdots \otimes x_n. \label{lq}
\end{align}
Note that $\r_0(x)=\r(x)$ is the free right annihilation operator, $\ell_0(x)$ is the free left annihilation operator and  $\ell_q(x) =\r_{1/q}(x) q^{N-1}$. 
\end{theorem}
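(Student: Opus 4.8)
The plan is to compute $\B(x)$ on a homogeneous component $H^{\otimes n}$ directly from Proposition~\ref{prop2}, which tells us $\B(x) = \r(x) R^{(n)}_{\alpha,q}$, by expanding the operator $R^{(n)}_{\alpha,q}$ into its two groups of terms: the ``symmetric'' part $1 + \sum_{k=1}^{n-1} q^k \pi_{n-1}\cdots\pi_{n-k}$ coming from the $S(n)$-cosets, and the ``reflected'' part $\alpha q^{n-1}\pi_{n-1}\cdots\pi_1\pi_0\left(1 + \sum_{k=1}^{n-1} q^k \pi_1\cdots\pi_k\right)$ coming from the cosets that involve $\pi_0$. I would treat these two summands separately and show the first contributes $\r_q(x)$ and the second contributes $\alpha\,\ell_q(\bar x)\,q^{N-1}$.

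First I would handle the symmetric part. Applying $\pi_{n-1}\cdots\pi_{n-k}$ to $x_1\otimes\cdots\otimes x_n$ moves the tensor leg $x_{n-k}$ to the last position, producing $x_1\otimes\cdots\otimes\check{x}_{n-k}\otimes\cdots\otimes x_n\otimes x_{n-k}$ (with the legs between positions $n-k$ and $n$ shifted left by one); then $\r(x)$ contracts against the last leg, which is now $x_{n-k}$, yielding $\langle x,x_{n-k}\rangle$ times $x_1\otimes\cdots\otimes\check x_{n-k}\otimes\cdots\otimes x_n$. Summing over $k=0,1,\dots,n-1$ with weight $q^k$ and reindexing $j=n-k$ gives exactly $\sum_{j=1}^n q^{n-j}\langle x,x_j\rangle\, x_1\otimes\cdots\otimes\check x_j\otimes\cdots\otimes x_n = \r_q(x)(x_1\otimes\cdots\otimes x_n)$, matching \eqref{rq}.

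Next I would handle the reflected part. Here $\pi_0$ acts first, turning $x_1$ into $\overline{x_1}$; the inner factor $1 + \sum_{k=1}^{n-1}q^k\pi_1\cdots\pi_k$ then acts, where $\pi_1\cdots\pi_k$ cyclically moves the first leg $\overline{x_1}$ into position $k+1$; finally $\pi_{n-1}\cdots\pi_1$ rotates so that this distinguished leg lands in the last slot, and $\r(x)$ contracts it — but crucially one must track which original leg ends up last. A cleaner route: observe that on $H^{\otimes n}$ the operator $\r(x)\,\pi_{n-1}\pi_{n-2}\cdots\pi_1$ sends $y_1\otimes\cdots\otimes y_n \mapsto \langle x,y_1\rangle\, y_2\otimes\cdots\otimes y_n$, i.e.\ it is the free \emph{left} annihilation $\ell_0(x)$ (up to the relabeling already built in). Combined with $\pi_0$ this produces $\langle x,\overline{x_1}\rangle = \langle \bar x, x_1\rangle$ acting on the first leg, and the inner sum $1+\sum_{k=1}^{n-1}q^k\pi_1\cdots\pi_k$ spreads this across all legs with the weights $q^{k-1}$ after accounting for the overall factor $q^{n-1}$: one checks $q^{n-1}\cdot(\text{spreading of leg }j) = q^{j-1}\cdot q^{n-1}/\cdots$, and careful bookkeeping yields $\alpha\sum_{j=1}^n q^{j-1}\langle\bar x,x_j\rangle\, x_1\otimes\cdots\otimes\check x_j\otimes\cdots\otimes x_n$ with an extra global factor $q^{n-1-(j-1)}$ — comparing with $\ell_q(\bar x)q^{N-1}$, which on $H^{\otimes n}$ equals $q^{n-1}\ell_q(\bar x) = q^{n-1}\sum_j q^{j-1}\langle\bar x,x_j\rangle(\cdots)$, forces the identification. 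The final remark $\ell_q(x) = \r_{1/q}(x)q^{N-1}$ is an immediate consequence of comparing \eqref{rq} and \eqref{lq}: on $H^{\otimes n}$, $q^{k-1} = q^{n-k}\cdot q^{2k-n-1}$, hmm — more simply, $\r_{1/q}(x)(x_1\otimes\cdots\otimes x_n) = \sum_k q^{-(n-k)}\langle x,x_k\rangle(\cdots)$, so $\r_{1/q}(x)q^{N-1}$ on $H^{\otimes n}$ gives $q^{n-1}\sum_k q^{k-n}\langle x,x_k\rangle(\cdots) = \sum_k q^{k-1}\langle x,x_k\rangle(\cdots) = \ell_q(x)$.

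The main obstacle I anticipate is purely combinatorial: correctly tracking the permutation of tensor legs under the cyclic products $\pi_{n-1}\cdots\pi_{n-k}$ and $\pi_1\cdots\pi_k$, and in particular verifying that after $\r(x)$ contracts the last leg the surviving legs are in the original left-to-right order (so that no spurious permutation is introduced into the output tensor), and that the $q$-exponents bookkeep exactly to $q^{n-k}$ and $q^{k-1}$ respectively. I would do this by a small induction on the length of the cyclic product, or equivalently by writing out the action on a generic simple tensor once and reading off the indices.
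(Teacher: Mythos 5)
Your proposal follows the paper's proof exactly: both invoke Proposition \ref{prop2} to write $\B(x)=\r(x)R^{(n)}_{\alpha,q}$ on $H^{\otimes n}$ and evaluate the two summands of $R^{(n)}_{\alpha,q}$ separately, getting $\r_q(x)$ from the first and $\alpha q^{n-1}\ell_q(\bar x)=\alpha\,\ell_q(\bar x)\,q^{N-1}$ from the second (the paper likewise compresses the leg-tracking into ``after some computations''). The only slip to repair is your first reading of the reflected part, where you let $\pi_0$ act before the inner sum $1+\sum_{k}q^{k}\pi_1\cdots\pi_k$ — the composition acts right-to-left, so the inner sum first brings $x_{k+1}$ to the front and $\pi_0$ then conjugates it; your ``cleaner route'' via $\r(x)\pi_{n-1}\cdots\pi_1=\ell_0(x)$ already treats the order correctly and produces the weight $\alpha q^{n-1}\cdot q^{k}=\alpha q^{n+m-2}$ on the pairing $\langle x,\overline{x_m}\rangle$ with $m=k+1$, which is precisely $\alpha\,\ell_q(\bar x)\,q^{N-1}$.
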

\begin{proof}
From Propositions \ref{prop1},\ref{prop2} we have 
\begin{equation}
\begin{split}
\B(x)(x_1\otimes \cdots \otimes x_n)
&= \r(x)R_{\alpha,q}^{(n)}(x_1\otimes \cdots  \otimes x_n) = R+L,  
\end{split}
\end{equation}
where 
\begin{align}
&R = \r(x)\left(1+\sum_{k=1}^{n-1}q^{k}\pi_{n-1}\cdots \pi_{n-k}\right)(x_1\otimes \cdots  \otimes x_n), \\
&L= \alpha q^{n-1} \r(x)\pi_{n-1} \pi_{n-2} \cdots \pi_{1}\pi_0\left(1+\sum_{k=1}^{n-1}q^{k}\pi_{1}\cdots \pi_{k}\right)(x_1\otimes \cdots  \otimes x_n).
\end{align}
After some computations, we get 
$R=\r_q(x)(x_1\otimes \cdots  \otimes x_n)$ and 
$$
L=\alpha \sum_{m=1}^n q^{n+m-2} \langle x, \overline{x_m}\rangle \,x_1\otimes \cdots \otimes \check{x}_m \otimes \cdots \otimes x_n
$$
which is equal to $\alpha q^{n-1} \ell_{q}(\bar{x})$ from the self-adjointness of the map $~\bar{}~$. 
\end{proof}

\begin{proposition}\label{commutation}
For $x,y \in H$ we have the commutation relation 
\begin{equation}
\B(x)\B^\ast(y)- q \B^\ast(y)\B(x)= \langle  x,y \rangle \id + \alpha \langle x, \bar{y} \rangle\, q^{2 N}. 
\end{equation}
\end{proposition}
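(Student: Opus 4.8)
The plan is to verify the commutation relation by applying both sides to an arbitrary simple tensor $x_1 \otimes \cdots \otimes x_n \in H^{\otimes n}$ and using the explicit formula for $\B(x)$ from Theorem~\ref{thm1}, namely $\B(x) = \r_q(x) + \alpha\, \ell_q(\bar x)\, q^{N-1}$, together with $\B^\ast(y) = \r^\ast(y)$. First I would record how each of the four building blocks $\r_q$, $\ell_q$, $q^N$ and $\r^\ast$ interacts with the others on $H^{\otimes n}$: pushing $\r^\ast(y)$ past $\r_q(x)$ produces the term that is responsible for $\langle x,y\rangle \id$ plus a leftover sum; pushing $\r^\ast(y)$ past $\ell_q(\bar x)$ produces $\langle \bar x, \bar y\rangle = \langle x, y\rangle^{-}$—wait, more precisely $\langle \overline{x}, \overline{y}\rangle = \overline{\langle x,y\rangle}$ type identities via self-adjointness of the involution—contributing the $\alpha \langle x,\bar y\rangle q^{2N}$ term; and the grading shifts mean $q^{N-1}$ acts as $q^n$ or $q^{n\mp 1}$ depending on which side of a creation/annihilation one stands. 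The key bookkeeping identities I expect to need are $\r_q(x)\r^\ast(y) = q\, \r^\ast(y)\r_q(x) + \langle x,y\rangle \id$ (the standard $q$-relation of Bo\.zejko--Speicher), $\ell_q(x)\r^\ast(y) = q\,\r^\ast(y)\ell_q(x) q^{-1} \cdot(\text{something}) + \langle x, y\rangle(\dots)$, and the commutations of $q^{N}$ with $\r^\ast(y)$ (which is $\r^\ast(y) q^{N} = q^{-1} q^{N}\r^\ast(y)$, i.e. $q^N \r^\ast(y) = q\,\r^\ast(y) q^N$) and with $\r_q(x), \ell_q(x)$ (which lower degree by one, so $q^N \r_q(x) = q^{-1}\r_q(x) q^N$ on the appropriate subspace, equivalently $\r_q(x) q^N = q\, q^N \r_q(x)$).

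Concretely I would expand $\B(x)\B^\ast(y)$ and $q\,\B^\ast(y)\B(x)$ each into four terms coming from the two-term expression for $\B$, and match them term by term. The ``$\r_q$ vs.\ $\r^\ast$'' pieces cancel up to $\langle x,y\rangle\id$ by the classical $q$-commutation relation. The ``$\ell_q(\bar x) q^{N-1}$ vs.\ $\r^\ast(y)$'' pieces need the analogue of the $q$-relation for the left annihilation operator: using $\ell_q(\bar x) = \r_{1/q}(\bar x) q^{N-1}$ (recorded in Theorem~\ref{thm1}) together with the $q^N$-commutations, one should be able to massage $\ell_q(\bar x) q^{N-1} \r^\ast(y)$ into $q\, \r^\ast(y)\, \ell_q(\bar x) q^{N-1}$ plus a boundary term. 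That boundary term is where the new piece $\alpha \langle x, \bar y\rangle q^{2N}$ must appear: when the newly created vector $y$ at the end of the tensor is the one annihilated by $\ell_q(\bar x)$, the weight $q^{k-1}$ in \eqref{lq} becomes $q^{n}$ (with $n+1$ letters present), and combined with the prefactor $q^{N-1}$ coming after the creation one collects $q^{n}\cdot q^{n} = q^{2n}$, i.e.\ $q^{2N}$ on $H^{\otimes n}$, with coefficient $\langle \bar x, y\rangle = \langle x, \bar y\rangle$ by self-adjointness of the involution. Finally the mixed pieces (``$\r_q(x)$ vs.\ $\r^\ast(y)$'' paired against ``$\ell_q(\bar x)$'' and vice versa) should cancel identically because $\r^\ast(y)$ commutes cleanly enough with the index-shifting in $\r_q$ and $\ell_q$ on the respective subspaces.

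The main obstacle I anticipate is getting the powers of $q$ exactly right in the boundary terms: the three sources of $q$-weights—the weight $q^{n-k}$ inside $\r_q$, the weight $q^{k-1}$ inside $\ell_q$, and the explicit $q^{N-1}$ (which must be evaluated on the \emph{correct} homogeneous component, before or after a creation/annihilation has changed the degree)—all shift by $\pm 1$ under commutation with $\r^\ast(y)$, and keeping these synchronized so that the surviving term is precisely $\alpha\langle x,\bar y\rangle q^{2N}$ rather than $q^{2N\pm 1}$ requires care. A clean way to avoid index errors is to reduce everything to the already-established $q$-commutation relation: write $\ell_q(\bar x) = \r_{1/q}(\bar x)\,q^{N-1}$, apply the known relation for $\r_{1/q}$ against $\r^\ast(y)$ (with parameter $1/q$), and then track only the scalar factors produced by moving the various $q^N$'s into place. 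I would present the computation in that reduced form, doing the explicit simple-tensor check only for the one boundary term that yields the genuinely new summand.
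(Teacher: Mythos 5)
Your proposal is correct and follows essentially the same route as the paper: apply both sides to a simple tensor in $H^{\otimes n}$, use $\B^\ast(y)=\r^\ast(y)$ and $\B(x)=\r_q(x)+\alpha\,\ell_q(\bar x)q^{N-1}$ from Theorem~\ref{thm1}, and observe that all terms cancel except the boundary contributions $\langle x,y\rangle\id$ and $\alpha\langle\bar x,y\rangle q^{2n}=\alpha\langle x,\bar y\rangle q^{2N}$. (Minor quibble: since $\B^\ast(y)$ is a single term, each product expands into two pieces rather than four, but your identification of the two surviving boundary terms and the $q^n\cdot q^n=q^{2n}$ bookkeeping is exactly right.)
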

\begin{remark} This commutation relation is quite similar to 
$$
a_{q,t}(x)a_{q,t}^\ast(y)- q a_{q,t}^\ast(y)a_{q,t}(x)= \langle x,y \rangle t^N
$$
which appeared in \cite{B12}. 
\end{remark}
\begin{proof}
From Theorem \ref{thm1}, it holds that 
\begin{align*}
&\B(x)\B^\ast(y)(x_1\otimes \cdots \otimes x_n)
=  \sum_{k=1}^n q^{n+1-k} \langle x,x_k \rangle \, x_1\otimes \cdots \otimes \check{x}_k \otimes \cdots \otimes x_n \otimes y \\
&~~~~~~~~~~~~~~~~~~~~~~~~~~~~~~~~~~~~~~+\alpha q^n\sum_{k=1}^n q^{k-1} \langle \bar{x},x_k \rangle \, x_1\otimes \cdots \otimes \check{x}_k \otimes \cdots \otimes x_n \otimes y \\
&~~~~~~~~~~~~~~~~~~~~~~~~~~~~~~~~~~~~~~+\langle  x,y \rangle\,x_1\otimes \cdots \otimes x_n  + \alpha \langle \bar{x},y \rangle\, q^{2 n}x_1\otimes \cdots \otimes x_n, \\
&q\B^\ast(y)\B(x)(x_1\otimes \cdots \otimes x_n)
=  \sum_{k=1}^n q^{n+1-k} \langle x, x_k \rangle \, x_1\otimes \cdots \otimes \check{x}_k \otimes \cdots \otimes x_n \otimes y\\
&~~~~~~~~~~~~~~~~~~~~~~~~~~~~~~~~~~~~~~+\alpha q^n\sum_{k=1}^n q^{k-1} \langle \bar{x},x_k \rangle \, x_1\otimes \cdots \otimes \check{x}_k \otimes \cdots \otimes x_n \otimes y, 
\end{align*}
and the conclusion follows. 
\end{proof}

We will study the norm of the creation operators of type B. 
Let $[n]_q$ be the $q$-number 
$$
[n]_q:= 1+q+\cdots+q^{n-1},\qquad n \geq1
$$
 and let $[n]_q!$ be the $q$-factorial 
$$
[n]_q !:= [1]_q \cdots [n]_q,\qquad n \geq1. 
$$ 
Let $(s;q)_n$ be the $q$-Pochhammer symbol
$$
(s;q)_n:= \prod_{k=1}^n(1-s q^{k-1}),\qquad s \in \R, |q|<1, n \geq1. 
$$

\begin{lemma} \label{lem4}
For $x \in H$, we get 
\begin{align}
&\|x^{\otimes n}\|_{\alpha,q}^2 = [n]_q! (-\alpha\langle x,\bar{x}\rangle/\|x\|^2; q)_n  \|x\|^{2 n}, \qquad n \geq 1, \label{eq0001}\\
&\|R_{\alpha,q}^{(n)}\|_{0,0} \leq (1+|\alpha| |q|^{n-1})[n]_q,\qquad n \geq 1. 
\end{align}
\end{lemma}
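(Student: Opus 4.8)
The plan is to handle the two estimates separately; neither is deep once the correct reduction is in place.

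\emph{Computation of $\|x^{\otimes n}\|_{\alpha,q}^2$.} We may assume $x\neq0$, so $\|x\|^2>0$. The key point is that $x^{\otimes n}$ is almost fixed by the action of $\Sigma(n)$: each $\pi_i$ with $1\le i\le n-1$ transposes two equal legs and hence fixes $x^{\otimes n}$, whereas $\pi_0$ only replaces the first leg $x$ by $\bar{x}$. Feeding this into the explicit form of $R^{(n)}_{\alpha,q}$ from Proposition \ref{prop1}, the factor $1+\sum_{k=1}^{n-1}q^k\pi_{n-1}\cdots\pi_{n-k}$ sends $x^{\otimes n}$ to $[n]_q\,x^{\otimes n}$, the inner bracket $1+\sum_{k=1}^{n-1}q^k\pi_1\cdots\pi_k$ also sends $x^{\otimes n}$ to $[n]_q\,x^{\otimes n}$, and $\pi_{n-1}\pi_{n-2}\cdots\pi_1\pi_0$ then walks the single $\bar{x}$ from the first leg to the last, producing $x^{\otimes(n-1)}\otimes\bar{x}$. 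Hence
\[
R^{(n)}_{\alpha,q}\,x^{\otimes n}=[n]_q\bigl(x^{\otimes n}+\alpha q^{n-1}\,x^{\otimes(n-1)}\otimes\bar{x}\bigr).
\]

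\emph{From this to a recursion and the closed form.} Next I would use the decomposition $P^{(n)}_{\alpha,q}=(P^{(n-1)}_{\alpha,q}\otimes I)R^{(n)}_{\alpha,q}$ of Proposition \ref{prop1}, the definition of $\langle\cdot,\cdot\rangle_{\alpha,q}$ in terms of $P^{(n)}_{\alpha,q}$, and the multiplicativity of $\langle\cdot,\cdot\rangle_{0,0}$ over legs (note $P^{(n-1)}_{\alpha,q}\otimes I$ leaves the last leg untouched):
\[
\|x^{\otimes n}\|_{\alpha,q}^2=\bigl\langle x^{\otimes n},(P^{(n-1)}_{\alpha,q}\otimes I)R^{(n)}_{\alpha,q}x^{\otimes n}\bigr\rangle_{0,0}=[n]_q\bigl(\|x\|^2+\alpha q^{n-1}\langle x,\bar{x}\rangle\bigr)\,\|x^{\otimes(n-1)}\|_{\alpha,q}^2,
\]
with base case $\|x^{\otimes0}\|_{\alpha,q}^2=\|\Omega\|^2=1$. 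Iterating over $n$ gives $\|x^{\otimes n}\|_{\alpha,q}^2=\bigl(\prod_{j=1}^n[j]_q\bigr)\prod_{j=1}^n\bigl(\|x\|^2+\alpha q^{j-1}\langle x,\bar{x}\rangle\bigr)=[n]_q!\,\|x\|^{2n}\prod_{j=1}^n\bigl(1+\alpha q^{j-1}\langle x,\bar{x}\rangle/\|x\|^2\bigr)$, and the last product is precisely the $q$-Pochhammer symbol $(-\alpha\langle x,\bar{x}\rangle/\|x\|^2;q)_n$, which is \eqref{eq0001}.

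\emph{Norm of $R^{(n)}_{\alpha,q}$.} For this I would just apply the triangle inequality after observing that every generator acts unitarily on $(H^{\otimes n},\langle\cdot,\cdot\rangle_{0,0})$: this is clear for $\pi_i$ with $1\le i\le n-1$, and for $\pi_0$ it holds because the self-adjoint involution satisfies $\langle\bar{x},\bar{y}\rangle=\langle x,y\rangle$ and is therefore unitary on $H$. Consequently every product of generators — in particular every $\sigma\in\Sigma(n)$, and the element $\tau:=\pi_{n-1}\cdots\pi_1\pi_0$ — has operator norm $1$. Writing $R^{(n)}_{\alpha,q}=A+\alpha q^{n-1}\tau B$ with $A=1+\sum_{k=1}^{n-1}q^k\pi_{n-1}\cdots\pi_{n-k}$ and $B=1+\sum_{k=1}^{n-1}q^k\pi_1\cdots\pi_k$, the triangle inequality gives $\|A\|_{0,0},\|B\|_{0,0}\le\sum_{k=0}^{n-1}|q|^k$ and hence $\|R^{(n)}_{\alpha,q}\|_{0,0}\le(1+|\alpha|\,|q|^{n-1})\sum_{k=0}^{n-1}|q|^k$, which is the asserted bound $(1+|\alpha|\,|q|^{n-1})[n]_q$ (the sum equals $[n]_q$ exactly when $q\ge0$, and $[n]_{|q|}$ in general).

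\emph{Where the difficulty is.} Neither part is hard; the two spots that need genuine care are (i) checking that $\pi_{n-1}\cdots\pi_1\pi_0$ really transports the unique $\bar{x}$ all the way to the last leg, so that the recursion closes at level $n$ with the clean coefficient $[n]_q(\|x\|^2+\alpha q^{n-1}\langle x,\bar{x}\rangle)$, and (ii) noticing that $\pi_0$ is $\langle\cdot,\cdot\rangle_{0,0}$-unitary, which is exactly the place where the self-adjointness (hence isometry) of the involution $x\mapsto\bar{x}$ enters.
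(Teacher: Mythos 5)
Your proof is correct and follows essentially the same route as the paper: the recursion $c_n=[n]_q\bigl(\|x\|^2+\alpha q^{n-1}\langle x,\bar{x}\rangle\bigr)c_{n-1}$ extracted from the decomposition $P^{(n)}_{\alpha,q}=(P^{(n-1)}_{\alpha,q}\otimes I)R^{(n)}_{\alpha,q}$ for the first identity, and the triangle inequality together with $\|\pi_i\|_{0,0}=1$ for the second. Your parenthetical caveat is actually a point the paper glosses over: the triangle inequality only yields $(1+|\alpha|\,|q|^{n-1})[n]_{|q|}$, and the stated bound with $[n]_q$ is genuinely false for $q<0$ (e.g.\ for $n=2$, $\alpha=0$, $q=-1/2$ and $\dim H\geq 2$ one has $\|1+q\pi_1\|_{0,0}=3/2>[2]_q=1/2$), though this is harmless since the lemma is only invoked later for $q>0$.
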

\begin{proof} 
(1)\,\, Let $c_n:=\|x^{\otimes n}\|_{\alpha,q}^2$ for $n \geq 0$ with convention $x^{\otimes 0}=\Omega$. Then $c_0=1$ and 
\begin{align*}
c_n 
&= \langle x^{\otimes n}, P_{\alpha,q}^{(n)}x^{\otimes n}\rangle_{0,0} \\
&= \langle x^{\otimes n},  (P^{(n-1)}_{\alpha,q}\otimes I)R_{\alpha,q}^{(n)} x^{\otimes n}\rangle_{0,0} \\
&= \langle x^{\otimes n},  (P^{(n-1)}_{\alpha,q}\otimes I)\left( 1+\sum_{k=1}^{n-1}q^{k}\pi_{n-1}\cdots \pi_{n-k}\right) x^{\otimes n}\rangle_{0,0} \\
&~~~~~+  \alpha q^{n-1}\langle x^{\otimes n},  (P^{(n-1)}_{\alpha,q}\otimes I) \pi_{n-1} \pi_{n-2} \cdots \pi_{1}\pi_0\left(1+\sum_{k=1}^{n-1}q^{k}\pi_{1}\cdots \pi_{k}\right)  x^{\otimes n}\rangle_{0,0}\\
&= [n]_q \langle x^{\otimes n},  (P^{(n-1)}_{\alpha,q}\otimes I) x^{\otimes n}\rangle_{0,0} +\alpha q^{n-1}[n]_q \langle x^{\otimes n},  (P^{(n-1)}_{\alpha,q}\otimes I)(x^{\otimes (n-1)} \otimes \bar{x})\rangle_{0,0}\\
&= [n]_q (\|x\|^2 + \alpha\langle x,\bar{x}\rangle\, q^{n-1}) c_{n-1}, \qquad n \geq 1, 
\end{align*}
and so the conclusion follows by induction. 

(2)\,\, Since $\|\pi_i\|_{0,0} =1$ for all $i$, the conclusion follows easily by definition of $R_{\alpha,q}^{(n)}$. 
\end{proof}

\begin{theorem} Suppose that $x \in H, x \neq0$. 
\begin{enumerate}[\rm(1)]
\item\label{item1} If $-1 < q \leq 0$ and $\alpha \langle x,\bar{x}\rangle \geq0$, then 
\begin{equation}\label{eq01}
\|\B^\ast(x)\|_{\alpha,q}= \sqrt{\|x\|^2 + \alpha \langle x,\bar{x}\rangle}. 
\end{equation}
\item\label{item2} If $-1 < q \leq 0$ and $\alpha \langle x,\bar{x}\rangle <0$, then 
\begin{equation}\label{eq02}
\frac{\|x\|}{\sqrt{1-q}}\leq \|\B^\ast(x)\|_{\alpha,q}\leq \|x\|. 
\end{equation}
\item\label{item3} If $|\alpha| \leq q <1$, then 
$$
\|\B^\ast(x)\|_{\alpha,q}= \frac{\|x\|}{\sqrt{1-q}}. 
$$
\item\label{item4} If $0<q<\alpha  \langle x,\bar{x}\rangle/\|x\|^2$, then 
$$
\frac{\|x\|}{\sqrt{1-q}}< \|\B^\ast(x)\|_{\alpha,q}\leq \sqrt{\frac{1+|\alpha|}{1-q}}\|x\|. 
$$
\item\label{item5} Otherwise, 
$$
\frac{\|x\|}{\sqrt{1-q}}\leq \|\B^\ast(x)\|_{\alpha,q}\leq \sqrt{\frac{1+|\alpha|}{1-q}}\|x\|. 
$$
\end{enumerate}
\end{theorem}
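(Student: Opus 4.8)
The plan is to prove all five estimates at once by working grade by grade. Since the deformed inner product $\langle\cdot,\cdot\rangle_{\alpha,q}$ respects the grading of $\F$ by $\N\cup\{0\}$, the subspaces $H^{\otimes n}$ are pairwise orthogonal and $\B^\ast(x)$ maps $H^{\otimes n}$ into $H^{\otimes(n+1)}$; writing $\lambda_n$ for the square of the operator norm of the restriction $\B^\ast(x)\colon H^{\otimes n}\to H^{\otimes(n+1)}$, one has $\|\B^\ast(x)\|_{\alpha,q}^2=\sup_{n\ge0}\lambda_n$, and since $\B(x)\B^\ast(x)$ and $\B^\ast(x)\B(x)$ restricted to $H^{\otimes n}$ are of the forms $T^\ast T$ and $S^\ast S$ with $\|T\|^2=\lambda_n$, $\|S\|^2=\lambda_{n-1}$, also $\|\B(x)\B^\ast(x)|_{H^{\otimes n}}\|=\lambda_n$ and $\|\B^\ast(x)\B(x)|_{H^{\otimes n}}\|=\lambda_{n-1}$. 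I shall use throughout that $\alpha\langle x,\bar x\rangle$ is real with $|\alpha\langle x,\bar x\rangle|\le|\alpha|\,\|x\|^2<\|x\|^2$ (the involution being self-adjoint, hence isometric), so that every scalar $\|x\|^2+\alpha\langle x,\bar x\rangle\,q^m$ exceeds $(1-|\alpha|)\|x\|^2>0$. Two inputs drive everything. First, a universal lower bound from testing on diagonal vectors: Lemma \ref{lem4}(1) gives
$$
\lambda_n\ \ge\ \frac{\|x^{\otimes(n+1)}\|_{\alpha,q}^2}{\|x^{\otimes n}\|_{\alpha,q}^2}\ =\ [n+1]_q\bigl(\|x\|^2+\alpha\langle x,\bar x\rangle\,q^n\bigr),
$$
so, letting $n\to\infty$, $\|\B^\ast(x)\|_{\alpha,q}^2\ge\frac{\|x\|^2}{1-q}$ for all $\alpha,q\in(-1,1)$. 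Second, the operator identity on $H^{\otimes n}$ from Proposition \ref{commutation}, $\B(x)\B^\ast(x)=q\,\B^\ast(x)\B(x)+\bigl(\|x\|^2+\alpha\langle x,\bar x\rangle\,q^{2n}\bigr)I$, whose first summand is $q$ times the positive operator $\B^\ast(x)\B(x)|_{H^{\otimes n}}$ of norm $\lambda_{n-1}$, and whose scalar term is a positive multiple of $I$.

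For $-1<q\le0$ I would use the second input alone: the first summand is then $\le0$, so $0\le\B(x)\B^\ast(x)|_{H^{\otimes n}}\le\bigl(\|x\|^2+\alpha\langle x,\bar x\rangle\,q^{2n}\bigr)I$, giving $\lambda_n\le\|x\|^2+\alpha\langle x,\bar x\rangle\,q^{2n}$. If $\alpha\langle x,\bar x\rangle\ge0$, the right-hand side is largest at $n=0$, so $\|\B^\ast(x)\|_{\alpha,q}^2\le\|x\|^2+\alpha\langle x,\bar x\rangle=\lambda_0\le\|\B^\ast(x)\|_{\alpha,q}^2$, proving \eqref{eq01}. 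If $\alpha\langle x,\bar x\rangle<0$, it is $\le\|x\|^2$, which with the universal lower bound gives \eqref{eq02}.

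For $0\le q<1$ I would combine the two inputs. Now the second gives the recursion $\lambda_n\le q\,\lambda_{n-1}+\|x\|^2+\alpha\langle x,\bar x\rangle\,q^{2n}$ with $\lambda_0=\|x\|^2+\alpha\langle x,\bar x\rangle$; iterating it and applying the telescopings $q^n+[n]_q=[n+1]_q$ and $q^n+q^{n+1}[n]_q=q^n[n+1]_q$ collapses the right-hand side to exactly $[n+1]_q(\|x\|^2+\alpha\langle x,\bar x\rangle q^n)$, which coincides with the lower bound of the first input. Hence
$$
\|\B^\ast(x)\|_{\alpha,q}^2\ =\ \sup_{n\ge0}\,[n+1]_q\bigl(\|x\|^2+\alpha\langle x,\bar x\rangle\,q^n\bigr),
$$
and the remaining items reduce to analysing this supremum. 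The crude estimates $[n+1]_q\le\frac1{1-q}$ and $|\alpha\langle x,\bar x\rangle|\le|\alpha|\,\|x\|^2$ yield $\|\B^\ast(x)\|_{\alpha,q}^2\le\frac{(1+|\alpha|)\|x\|^2}{1-q}$, the upper bound of \ref{item4} and \ref{item5}, while the lower bound of \ref{item5} is the universal one. For \ref{item4}, the hypothesis $q<\alpha\langle x,\bar x\rangle/\|x\|^2$ is precisely the statement $\alpha\langle x,\bar x\rangle(1-q^{n+1})>q\|x\|^2$ for all large $n$, which rearranges to $[n+1]_q(\|x\|^2+\alpha\langle x,\bar x\rangle q^n)>\frac{\|x\|^2}{1-q}$ for such $n$, giving the strict lower bound. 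For \ref{item3}, when $|\alpha|\le q$ one has $|\alpha\langle x,\bar x\rangle|\le q\|x\|^2$, so
$$
[n+1]_q\bigl(\|x\|^2+\alpha\langle x,\bar x\rangle\,q^n\bigr)\ \le\ \frac{1-q^{n+1}}{1-q}\,\|x\|^2\bigl(1+q^{n+1}\bigr)\ =\ \frac{1-q^{2n+2}}{1-q}\,\|x\|^2\ \le\ \frac{\|x\|^2}{1-q}
$$
for all $n$; with the universal lower bound this forces $\|\B^\ast(x)\|_{\alpha,q}^2=\frac{\|x\|^2}{1-q}$.

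I expect the main difficulty to be organisational rather than technical: one has to keep track of which of the two inputs is the sharp one in each regime — the commutation recursion for $0\le q<1$, the comparison with the scalar operator for $-1<q\le0$, the diagonal vectors always supplying the matching lower bound — and to be careful about the sign of $\alpha\langle x,\bar x\rangle$, which is exactly what separates \ref{item1} from \ref{item2} and \ref{item4} from \ref{item5}. The only genuinely computational step is checking that the iterated recursion for $0\le q<1$ telescopes to precisely $[n+1]_q(\|x\|^2+\alpha\langle x,\bar x\rangle q^n)$, so that the upper and lower bounds meet.
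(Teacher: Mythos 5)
Your argument is correct, and for the upper bounds in the regime $q>0$ it takes a genuinely different route from the paper. The lower bounds (test vectors $x^{\otimes n}$ together with Lemma \ref{lem4}(1)) and the treatment of cases (1)--(2) via the commutation relation with $q\le 0$ coincide with the paper's proof, including the strict-inequality argument in (4), which is the same computation in disguise ($d(1-q^{n+1})>qc$ is the paper's condition $h_n(q)>0$ after an index shift). Where you diverge is in the upper bounds for (3), (4), (5): the paper deduces these from the quasi-multiplicativity $P^{(n)}_{\alpha,q}\le \|R^{(n)}_{\alpha,q}\|_{0,0}\,(P^{(n-1)}_{\alpha,q}\otimes I)$ and the estimate $\|R^{(n)}_{\alpha,q}\|_{0,0}\le(1+|\alpha|\,|q|^{n-1})[n]_q$ (Lemma \ref{lem4}(2)), i.e.\ from the Coxeter-group structure of the symmetrizer, whereas you iterate the commutation relation gradewise to get $\lambda_n\le q\lambda_{n-1}+\|x\|^2+\alpha\langle x,\bar x\rangle q^{2n}$, observe that this telescopes (via $1+q[n]_q=[n]_q+q^n=[n+1]_q$) exactly onto the test-vector lower bound, and hence obtain the \emph{exact} value $\|\B^\ast(x)\|_{\alpha,q}^2=\sup_{n\ge0}[n+1]_q\bigl(\|x\|^2+\alpha\langle x,\bar x\rangle q^n\bigr)$ for all $q\ge0$. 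This is strictly sharper than what the paper proves and makes (3)--(5) transparent one-line consequences; your derivation of (3) from $(1-q^{n+1})(1+q^{n+1})=1-q^{2n+2}\le1$ is essentially the paper's refinement of $f_n(q)$ applied to the exact $\lambda_n$ instead of to $\|R^{(n)}_{\alpha,q}\|_{0,0}$. The price is that the recursion only closes in the right direction when $q\ge0$ (otherwise the term $q\,\B^\ast(x)\B(x)$ must be discarded rather than bounded above by $q\lambda_{n-1}I$), but since cases (3), (4) and ``otherwise'' all force $q>0$ while $q\le0$ is exhausted by (1)--(2), your case split covers the theorem completely; the paper's $R^{(n)}$-norm argument, by contrast, is sign-insensitive and is the one that generalizes to other deformations.
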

\begin{remark}
When $\alpha=-1$ then \eqref{eq01} reminds us a \emph{Lie ball} \cite[Example 3.10]{U87}. 
\end{remark}
\begin{proof}
\eqref{item1}\,\, From Proposition \ref{commutation}, for $f \in H^{\otimes n}$ and $n \geq 0$, we get 
\begin{equation}\label{eq001}
\begin{split}
\|\B^\ast(x)f\|_{\alpha,q}^2
&=\langle \B(x)\B^\ast(x)f, f\rangle_{\alpha,q} \\
&= q \langle \B^\ast(x)\B(x)f, f\rangle_{\alpha,q} + (\|x\|^2 + \alpha \langle x,\bar{x}\rangle\, q^{2n} ) \|f\|_{\alpha,q}^2\\
&\leq (\|x\|^2 + \alpha \langle x,\bar{x}\rangle\,q^{2n}) \|f\|_{\alpha,q}^2. 
\end{split}
\end{equation}
If $\alpha \langle x,\bar{x}\rangle\geq0$, then $\|\B^\ast(x)f\|_{\alpha,q}^2 \leq  (\|x\|^2 + \alpha \langle x,\bar{x}\rangle) \|f\|_{\alpha,q}^2$. The equality is achieved when $f=\Omega$, and hence \eqref{eq01} holds.  

\eqref{item2} Upper bound. If $\alpha \langle x,\bar{x}\rangle<0$, then from \eqref{eq001} one may obtain the inequality $\|\B^\ast(x)f\|_{\alpha,q}^2 \leq  \|x\|^2 \|f\|_{\alpha,q}^2$.

\eqref{item2},\eqref{item3},\eqref{item5} Lower bound. It follows from \eqref{eq0001} that 
\begin{equation}\label{eq0000}
\begin{split}
\|\B^\ast(x)x^{\otimes (n-1)}\|_{\alpha,q}^2
&=\|x^{\otimes n}\|_{\alpha,q}^2 \\
&= [n]_q!(\|x\|^2 +\alpha \langle x,\bar{x}\rangle\, q^{n-1}) \|x^{\otimes (n-1)}\|_{\alpha,q}^2. 
\end{split}
\end{equation}
By letting $n\to\infty$, the lower bound follows.

\eqref{item4},\eqref{item5} Upper bound. The proof follows the line of \cite[Lemma 4]{BS91}. We have 
\begin{equation*}
\begin{split}
(P_{\alpha,q}^{(n)})^2 
&=   P_{\alpha,q}^{(n)} (P_{\alpha,q}^{(n)})^\ast \\
&= (P_{\alpha,q}^{(n-1)}\otimes I) R_{\alpha,q}^{(n)} (R_{\alpha,q}^{(n)})^\ast  (P_{\alpha,q}^{(n-1)} \otimes I)^\ast   \\
&\leq  \|R_{\alpha,q}^{(n)}\|_{0,0}^2 (P_{\alpha,q}^{(n-1)}\otimes I) (P_{\alpha,q}^{(n-1)}\otimes I)^\ast \\
&= \|R_{\alpha,q}^{(n)}\|_{0,0}^2 ((P_{\alpha,q}^{(n-1)})^2\otimes I).  
\end{split}
\end{equation*}
By taking the square root of operators using Lemma \ref{lem4} one gets
\begin{equation}\label{eq0002}
\begin{split}
P_{\alpha,q}^{(n)} &\leq  \|R_{\alpha,q}^{(n)}\|_{0,0} (P_{\alpha,q}^{(n-1)}\otimes I)  \\
&\leq (1+|\alpha\|q|^{n-1})[n]_q (P_{\alpha,q}^{(n-1)}\otimes I) \\
&\leq \frac{1+|\alpha|}{1-q}P_{\alpha,q}^{(n-1)}\otimes I
\end{split}
\end{equation}
on $H^{\otimes n}$ regarding the inner product $\langle \cdot,\cdot\rangle_{0,0}$. Therefore we have for $f\in H^{\otimes n}$ that 
\begin{equation}\label{eq0003}
\begin{split}
\langle \B^\ast(x) f,  \B^\ast(x) f \rangle_{\alpha,q} 
&= \langle f\otimes x,  P_{\alpha,q}^{(n+1)}(f\otimes x) \rangle_{0,0} \\
&\leq \frac{1+|\alpha|}{1-q} \langle f\otimes x,  (P_{\alpha,q}^{(n)}f)\otimes x \rangle_{0,0} \\
&= \frac{1+|\alpha|}{1-q} \langle f,  P_{\alpha,q}^{(n)}f\rangle_{0,0} \langle x, x\rangle \\
&=\frac{1+|\alpha|}{1-q} \|f\|_{\alpha,q}^2 \|x\|^2,  
\end{split}
\end{equation}
and hence the inequality $\|\B^\ast(x)\|_{\alpha,q} \leq \sqrt{(1+|\alpha|)/(1-q)}\|x\|$ holds.

\eqref{item3} Upper bound. We try to refine the estimate \eqref{eq0002} by carefully looking at $(1+|\alpha\|q|^{n-1})[n]_q $ as a function of $n \geq 1$. Let
\begin{equation}\label{function}
f_n(q):= (1-q^n)(1+ b q^{n-1})=1+q^{n-1}h_n(q),  
\end{equation}
where  $b=|\alpha|$ and $h_n(q)=b - q -b q^n$. Then $h_n(b)=-b^{n+1}\leq0$, $h_n'(q)=-1 -n b q^{n-1}<0$, and hence for  $q\in [b,1)$ one obtains $h_n(q)\leq 0$ and hence $(1+|\alpha| q^{n-1})[n]_q = f_n(q)/(1-q) \leq 1/(1-q)$. 
Following the argument \eqref{eq0003}, we have the upper bound $\|\B^\ast(x)\|_{\alpha,q} \leq (1/\sqrt{1-q} )\|x\|.$ The lower bound was already obtained and so \ref{item3} follows. 

\eqref{item4} Lower bound. We will carefully study the coefficient $[n]_q!(\|x\|^2 +\alpha \langle x,\bar{x}\rangle\, q^{n-1}) = f_n(q) \|x\|^2/(1-q)$ in \eqref{eq0000}, now with $b= \alpha \langle x, \bar{x}\rangle/\|x\|^2>0$. Then $h_n'(q)<0$ and hence $h_n$ is decreasing. Moreover $h_n(b)=-b^{n+1}<0$ and $h_n(b -b^{n+1})= b^{n+1}-b (b-b^{n+1})^n > b^{n+1} - b\cdot b^n =0$. Hence $h_n$ has a unique zero  $q_n(b)$ in $(b-b^{n+1},b)$. In particular $q_n(b)\to b$ as $n \to \infty$. Therefore, for fixed $0<q < b$, the zero $q_n(b)$ is larger than $q$ for large $n$ and hence $h_n(q)$ is strictly positive. This implies that $\|\B^\ast(x)x^{\otimes (n-1)}\|_{\alpha,q}  >(1/\sqrt{1-q} )\|x^{\otimes (n-1)}\|_{\alpha,q}.$
\end{proof}

\section{Gaussian operator of type B}
\subsection{Probability density function and orthogonal polynomials}
\begin{definition}
The operator 
\begin{equation}
\G(x)= \B(x) +\B^\ast(x),\qquad x \in H
\end{equation}
on $\F$ is called the \emph{Gaussian operator of type B} or \emph{$(\alpha,q)$-Gaussian operator}. The family $\{\G(x)\mid x \in H\}$ is called the Gaussian process of type B or $(\alpha,q)$-Gaussian process. 
\end{definition}

For a probability measure $\mu$ with finite moments of all orders, let us orthogonalize the sequence $(1,t,t^2,t^3,\dots)$ in the Hilbert space $L^2(\R,\mu)$, following the Gram-Schmidt method. This procedure yields orthogonal polynomials $(P_0(t), P_1(t), P_2(t), \dots)$ with $\text{deg}\, P_n(t) =n$. Multiplying by constants, we take $P_n(t)$ to be monic, i.e., the coefficient of $t^n$ is 1. It is known that they satisfy a recurrence relation
\[
t P_n(t) = P_{n+1}(t) +\beta_n P_n(t) + \gamma_{n-1} P_{n-1}(t),\qquad n =0,1,2,\dots
\]
with the convention that $P_{-1}(t)=0$. The coefficients $\beta_n$ and $\gamma_n$ are called \emph{Jacobi parameters} and they satisfy $\beta_n \in \R$ and $\gamma_n \geq 0$. 
It is known that 
\begin{equation}\label{eq54}
\gamma_0 \cdots \gamma_n=\int_{\R}|P_{n+1}(t)|^2\mu(d t),\qquad n \geq 0.
\end{equation} 
Moreover, the measure $\mu$ has a finite support of cardinality $N$ if and only if $\gamma_{N-1}=0$ and $\gamma_n > 0$ for $n = 0,\dots, N-2$. 

The continued fraction representation of the Cauchy transform can be expressed in terms of the Jacobi Parameters:
\[
\int_{\R}\frac{\mu(d t)}{z-t} = \dfrac{1}{z-\beta_0 -\dfrac{\gamma_0}{z-\beta_1-\dfrac{\gamma_1}{z- \beta_2 - \cdots}}}.
\]
This representation is useful to calculate the Cauchy transform when Jacobi parameters are given. More details are found in \cite{HO07}. 

For $-1 < \alpha,q <1$ let $(P_n^{(\alpha,q)}(t))_{n=0}^\infty$ be the orthogonal polynomials with the recursion relation
\begin{equation}\label{recursion}
t P_n^{(\alpha, q)}(t) = P_{n+1}^{(\alpha, q)}(t) +[n]_q(1 + \alpha q^{n-1})P_{n-1}^{(\alpha, q)}(t), \qquad n=0,1,2,\dots
\end{equation}
where $P_{-1}^{(\alpha,q)}(t)=0,P_0^{(\alpha,q)}(t)=1$. These polynomials are called \emph{$q$-Meixner-Pollaczek polynomials}. The orthogonalizing probability measure $\qMP_{\alpha,q}$ is known in \cite[(14.9.4)]{KLS10}, supported on $(-2/\sqrt{1-q}, 2/\sqrt{1-q})$ and absolutely continuous with respect to the Lebesgue measure with density 
\begin{equation}\label{eq10}
\frac{d \qMP_{\alpha,q}}{dt}(t)=  \frac{(q;q)_\infty(\beta^2; q)_\infty}{2\pi\sqrt{4/(1-q) -t^2}}\cdot\frac{g(t,1;q) g(t,-1;q) g(t,\sqrt{q};q) g(t,-\sqrt{q};q)}{g(t, \i \beta;q)g(t,-\i \beta;q)}
\end{equation}
where 
\begin{align}
&g(t,b;q)= \prod_{k=0}^\infty(1-4 b t (1-q)^{-1/2} q^k + b^2 q^{2k}), \\
&(s;q)_\infty=\lim_{n\to\infty}(s; q)_n=\prod_{k=0}^\infty(1-s q^{k}),\qquad s \in \R, \\
&\beta
= 
\begin{cases}
\sqrt{-\alpha}, & \alpha \leq 0, \\
\ri \sqrt{\alpha}, & \alpha \geq0.   
\end{cases}
\end{align}
\begin{remark}
For $\alpha <0$ the recursion \eqref{recursion} coincides with \cite[(14.9.4)]{KLS10} by considering the dilation 
$P_n^{(\alpha,q)}(t)\mapsto \lambda^{-n} P_n^{(\alpha,q)}(\lambda t)$ with $\lambda = \sqrt{1-q}/2$. Hence $P_n^{(\alpha,q)}(t)$ are orthogonal polynomials associated to $\qMP_{\alpha,q}$ for $\alpha <0$ (note that (14.9.2) contains an error; the integral should be performed over the interval $[-\pi/2 -\phi, \pi/2-\phi]$, not over $[-\pi,\pi]$). 
The case $\alpha \geq0$ (or $\beta^2 \leq0$) is not covered in \cite[(14.9.2)]{KLS10}, but we can resort to the analytic continuation. 
Let $(m_n(\qMP_{\alpha,q}))_{n=0}^\infty$ be the moments of $\qMP_{\alpha,q}$ and let $h_n(s):= (2/\sqrt{1-q})^{n+1} \frac{d \qMP_{\alpha,q}}{dt}(2s/\sqrt{1-q})$. 
We know that 
\begin{equation}\label{eq11}
m_n(\qMP_{\alpha,q})=\int_{-1}^{1}\, s^n \,h_n(s) \,ds,\qquad \beta^2 >0, ~q\in(-1,1),~n=0,1,2,\dots 
\end{equation}
The moments $m_n(\qMP_{\alpha,q})$ are polynomials of the Jacobi parameters $1-\beta^2 q^{n-1}$ (see Accardi-Bo\.zejko's formula \cite[Corollary 5.1]{AB98}) and so the moments extend to entire analytic functions of $\beta^2, q$. On the other hand, $g(t, \i \beta;q)g(t,-\i \beta;q)$ is analytic in $(\beta^2,q)$ and does not have a zero in an open neighborhood $U \subset \C^2$ of $[-1,1] \times (-1,1)$, and then the function $d\qMP_{\alpha,q}/dt$ and hence (by dominated convergence) the integral $\int_{-1}^{1}\, s^n \,h_n(s) \,ds$ has analytic continuation to the same region $U$. By analytic continuation, formula \eqref{eq11} holds for $(\beta^2,q)\in U$, and hence the measure $\qMP_{\alpha,q}$ is a probability measure giving the moments $(m_n(\qMP_{\alpha,q}))_{n=0}^\infty$ for the case $\alpha \geq0$ too. 
\end{remark}

\begin{theorem} Suppose $\alpha,q\in(-1,1)$ and $x \in H, \|x\|=1$. Let $\m$ be the probability distribution of $\G(x)$ regarding the vacuum state. Then $\m$ is equal to $\qMP_{\alpha\langle x,\bar{x}\rangle,q}$. 
\end{theorem}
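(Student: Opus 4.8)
The plan is to restrict $\G(x)$ to the cyclic subspace it generates from the vacuum, identify that restriction with a Jacobi (tridiagonal) operator, and then read off the orthogonalizing measure of $\m$ from the off-diagonal entries.

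First I would record two elementary facts about the involution: since $\overline{\langle x,\bar x\rangle}=\langle\bar x,x\rangle=\langle x,\bar x\rangle$, the number $\langle x,\bar x\rangle$ is real, and since $\langle\bar x,\bar x\rangle=\langle x,x\rangle$ we have $\|\bar x\|=\|x\|=1$, so $|\alpha\langle x,\bar x\rangle|\le|\alpha|<1$. Write $\gamma:=\alpha\langle x,\bar x\rangle$ and let $\mathcal{K}$ be the closed linear span of $\{x^{\otimes n}:n\ge0\}$ in the completion of $\F$ with respect to $\langle\cdot,\cdot\rangle_{\alpha,q}$; by the norm theorem $\G(x)=\B(x)+\B^\ast(x)$ is bounded and self-adjoint, so it extends to this completion. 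Since $\B^\ast(x)x^{\otimes n}=x^{\otimes(n+1)}$ and, by Theorem~\ref{thm1}, $\B(x)x^{\otimes n}=\bigl(\r_q(x)+\alpha\,\ell_q(\bar x)q^{N-1}\bigr)x^{\otimes n}$, where on the pure tensor $x^{\otimes n}$ one has $\r_q(x)x^{\otimes n}=[n]_q\,x^{\otimes(n-1)}$, $q^{N-1}x^{\otimes n}=q^{n-1}x^{\otimes n}$, and $\ell_q(\bar x)x^{\otimes n}=[n]_q\langle x,\bar x\rangle\,x^{\otimes(n-1)}$ (using $\langle\bar x,x\rangle=\langle x,\bar x\rangle$), we obtain
\[
\B(x)x^{\otimes n}=[n]_q\bigl(1+\gamma q^{n-1}\bigr)\,x^{\otimes(n-1)}.
\]
Hence $\mathcal{K}$ is $\G(x)$-invariant and contains $\Omega=x^{\otimes 0}$, so it is the closed cyclic subspace generated by $\Omega$, and the moments $\langle\Omega,\G(x)^k\Omega\rangle_{\alpha,q}$ are exactly the moments of the spectral measure of the bounded self-adjoint operator $\G(x)|_{\mathcal{K}}$ relative to $\Omega$; this spectral measure is $\m$.

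Next I would pass to the orthonormal basis $e_n:=x^{\otimes n}/\|x^{\otimes n}\|_{\alpha,q}$ of $\mathcal{K}$. By Lemma~\ref{lem4} (with $\|x\|=1$) we have $\|x^{\otimes n}\|_{\alpha,q}^2=[n]_q!\,(-\gamma;q)_n$, so
\[
\omega_n^2:=\frac{\|x^{\otimes n}\|_{\alpha,q}^2}{\|x^{\otimes(n-1)}\|_{\alpha,q}^2}=[n]_q\bigl(1+\gamma q^{n-1}\bigr)>0,
\]
the strict positivity coming from $|\gamma q^{n-1}|\le|\gamma|<1$. Dividing the two displayed identities above by the appropriate norms gives $\B^\ast(x)e_n=\omega_{n+1}e_{n+1}$ and $\B(x)e_n=\omega_n e_{n-1}$, so $\G(x)|_{\mathcal{K}}$ is the Jacobi operator $\G(x)e_n=\omega_{n+1}e_{n+1}+\omega_n e_{n-1}$ with zero diagonal and off-diagonal squares $\omega_n^2=[n]_q(1+\gamma q^{n-1})$.

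Finally, since $\G(x)|_{\mathcal{K}}$ is bounded and self-adjoint with cyclic vector $\Omega=e_0$, the measure $\m$ is a compactly supported probability measure, and the standard correspondence between tridiagonal operators and orthogonal polynomials (the recollection preceding \eqref{recursion}) shows that the monic orthogonal polynomials of $\m$ have Jacobi parameters $\beta_n=0$, $\gamma_{n-1}=\omega_n^2$, i.e.
\[
tP_n(t)=P_{n+1}(t)+[n]_q\bigl(1+\gamma q^{n-1}\bigr)P_{n-1}(t).
\]
Comparing with \eqref{recursion}, these are the $q$-Meixner--Pollaczek polynomials $P_n^{(\gamma,q)}$, and since $\qMP_{\gamma,q}$ is by construction (via \eqref{eq10} and the subsequent remark on analytic continuation) the probability measure orthogonalizing exactly these polynomials, we conclude $\m=\qMP_{\alpha\langle x,\bar x\rangle,q}$. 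The only genuinely computational step is the verification that $\B(x)$ sends $\C x^{\otimes n}$ into $\C x^{\otimes(n-1)}$ with scalar $[n]_q(1+\gamma q^{n-1})$ — this is where Theorem~\ref{thm1} and the reality of $\langle x,\bar x\rangle$ are used — and the one point to keep an eye on is that $\omega_n>0$ for every $n$, which guarantees the recursion is non-degenerate and $\m$ has infinite support, so the identification of $\m$ through its Jacobi parameters is legitimate.
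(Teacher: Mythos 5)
Your proposal is correct and follows essentially the same route as the paper: the key step in both is the computation $\G(x)x^{\otimes n}=x^{\otimes(n+1)}+[n]_q(1+\alpha\langle x,\bar{x}\rangle q^{n-1})x^{\otimes(n-1)}$ together with the norm formula of Lemma \ref{lem4}, matched against the recursion \eqref{recursion}. The paper packages this as an isometry $x^{\otimes n}\mapsto P_n^{(\alpha\langle x,\bar{x}\rangle,q)}$ onto $L^2(\R,\qMP_{\alpha\langle x,\bar{x}\rangle,q})$ and compares moments, while you phrase it as the spectral measure of a bounded Jacobi operator with cyclic vector $\Omega$ — the same argument in standard equivalent form, with your explicit checks that $\langle x,\bar{x}\rangle$ is real and $\omega_n>0$ being welcome additions.
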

\begin{proof} 
Let $\gamma_{n-1}= [n]_q(1 + \alpha \langle x, \bar{x}\rangle\, q^{n-1})$. 
From Lemma \ref{lem4} we get $\|x^{\otimes n}\|_{\alpha,q}^2=[n]_q!(-\alpha\langle x,\bar{x}\rangle;q)_n=\gamma_0\gamma_1\cdots \gamma_{n-1}$ and hence from \eqref{eq54} it follows that 
\begin{equation}
\|x^{\otimes n}\|_{\alpha,q}= \|P_n^{(\alpha\langle x,\bar{x}\rangle,q)}\|_{L^2},\qquad n\in\N\cup\{0\}. 
\end{equation}
Therefore the map $\Phi\colon (\text{span}\{x^{\otimes n}\mid n \geq 0\}, \|\cdot\|_{\alpha,q}) \to L^2(\R,\qMP_{\alpha\langle x,\bar{x}\rangle,q})$ defined by $\Phi(x^{\otimes n})= P_n^{(\alpha\langle x,\bar{x}\rangle,q)}(t)$ is an isometry. 
Note that  
\begin{align*}
\G(x)x^{\otimes n} 
&= \B^\ast(x)x^{\otimes n}+\B(x)x^{\otimes n}  \\
&= x^{\otimes (n+1)}+\r(x) R_{\alpha,q}^{(n)}x^{\otimes n}  \\
&=x^{\otimes (n+1)}+[n]_q x^{\otimes (n-1)} + \alpha q^{n-1} [n]_q \langle x,\bar{x}\rangle\,x^{\otimes (n-1)}\\
&= x^{\otimes (n+1)}+[n]_q (1 + \alpha\langle x,\bar{x}\rangle q^{n-1})x^{\otimes (n-1)}, 
\end{align*}
where Proposition \ref{prop2} was used on the second line. Hence inductively we can compute $\G(x)^n\Omega$ and show that $\Phi(\G(x)^n\Omega) = t^n$. Since $\Phi$ is an isometry we get 
$\langle \Omega, \G(x)^n\Omega\rangle_{\alpha,q} = m_n(\qMP_{\alpha\langle x,\bar{x}\rangle,q})$ for even integers $n$. For odd integers $n$ we can show that $\langle \Omega, \G(x)^n\Omega\rangle_{\alpha,q} =0= m_n(\qMP_{\alpha\langle x,\bar{x}\rangle,q})$.  
Since $\qMP_{\alpha\langle x,\bar{x}\rangle,q}$ is compactly supported, probability measures giving the moment sequence $m_n(\qMP_{\alpha\langle x,\bar{x}\rangle,q})$ are unique and hence $\qMP_{\alpha\langle x,\bar{x}\rangle,q}=\m$.  
\end{proof}

The probability density functions of $\qMP_{\alpha,q}$ are shown in Fig.\ \ref{dia1}--\ref{dia6}. 
\begin{figure}[htpb]
\begin{minipage}{0.5\hsize}
\begin{center}
 \includegraphics[width=70mm]{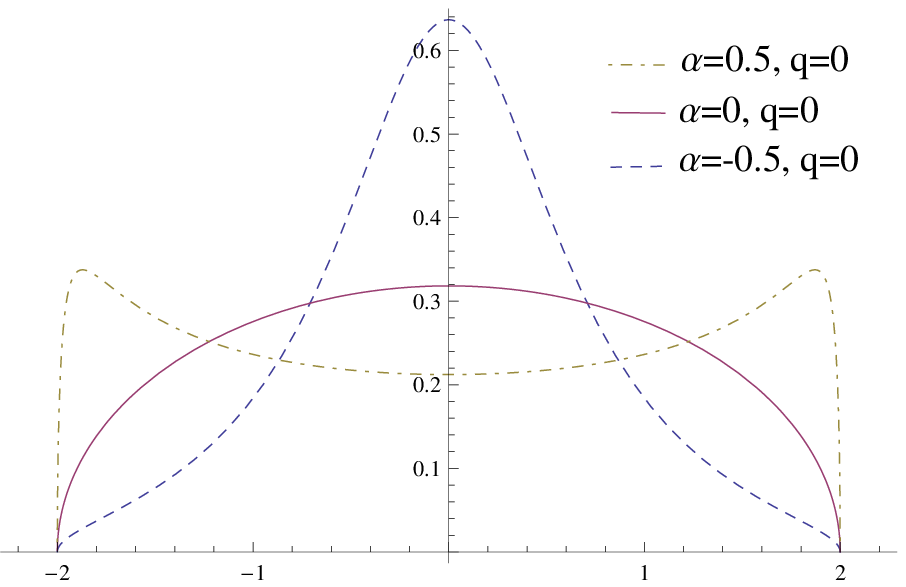}
\caption{$q=0$}\label{dia1}
\end{center}
  \end{minipage}
  \begin{minipage}{0.5\hsize}
\begin{center}
 \includegraphics[width=70mm]{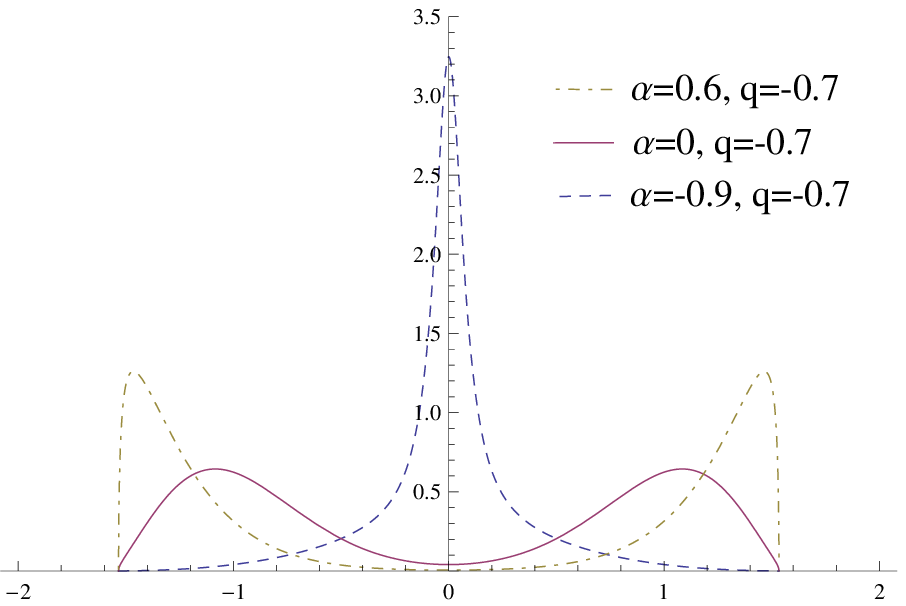}
\caption{$q=-0.7$}\label{dia2}
\end{center}
\end{minipage}
\vspace{10pt}

\begin{minipage}{0.5\hsize}
\begin{center}
 \includegraphics[width=70mm]{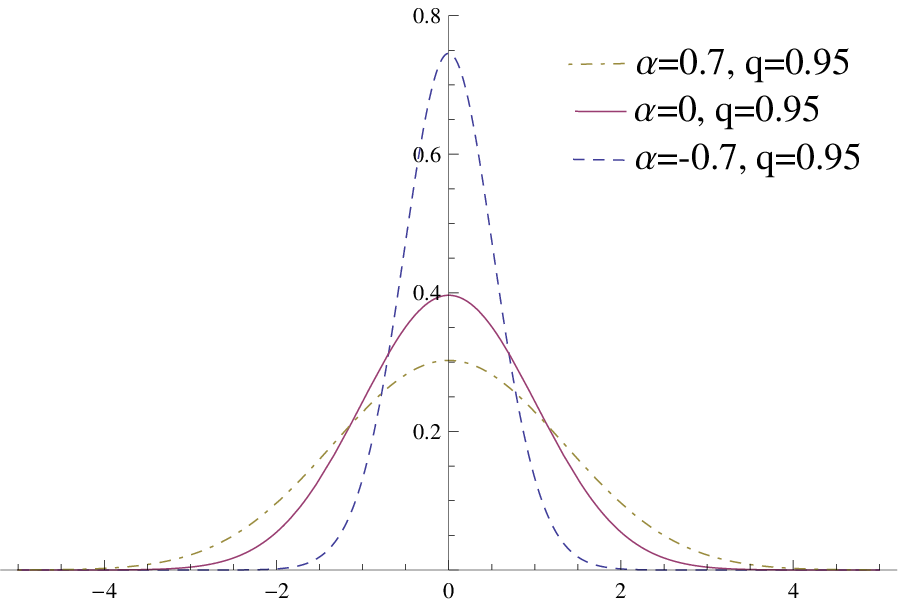}
\caption{$q=0.95$}\label{dia3}
\end{center}
\end{minipage}
\begin{minipage}{0.5\hsize}
\begin{center}
 \includegraphics[width=70mm,clip]{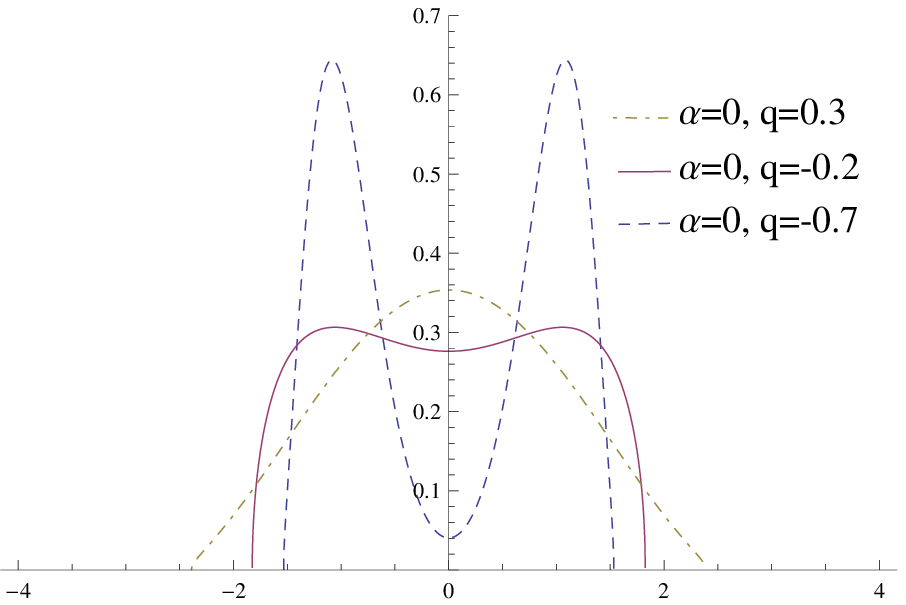}
\caption{$\alpha=0$}\label{dia4}
\end{center}
\end{minipage}

\vspace{10pt}

\begin{minipage}{0.5\hsize}
\begin{center}
 \includegraphics[width=70mm,clip]{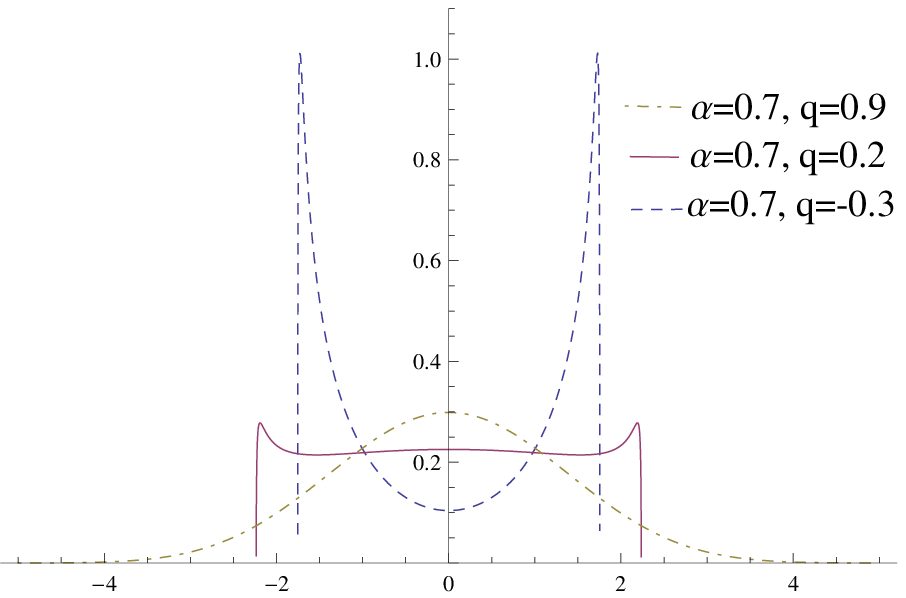}
\caption{$\alpha=0.7$}\label{dia5}
\end{center}
\end{minipage}
\begin{minipage}{0.5\hsize}
\begin{center}
 \includegraphics[width=70mm,clip]{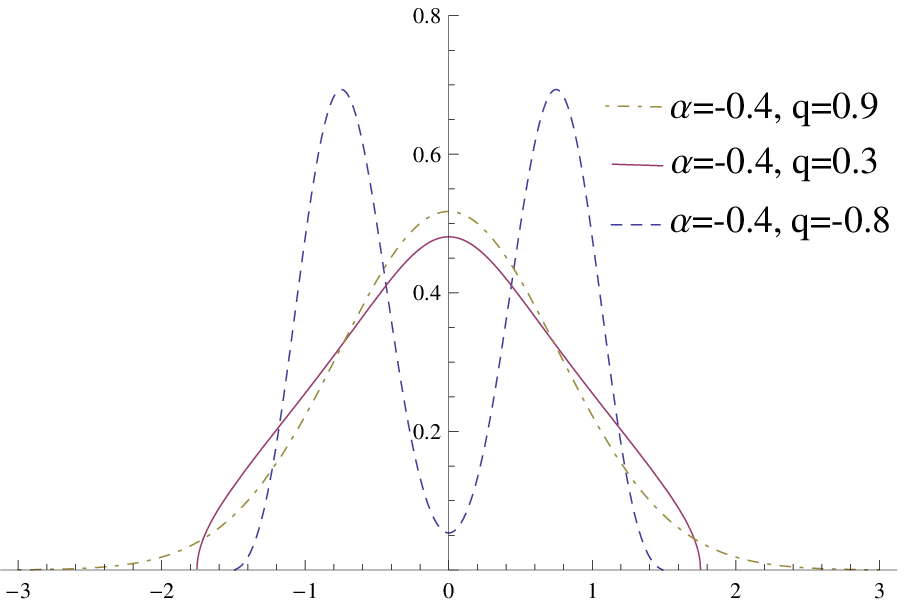}
\caption{$\alpha=-0.4$}\label{dia6}
\end{center}
\end{minipage}

\end{figure}

By weak continuity we may allow the parameters $(\alpha,q)$ of $\qMP_{\alpha,q}$ to take any values in $[-1,1] \times [-1,1]$. 
\begin{example} 
\begin{enumerate}[\rm(1)]
\item The measure $\qMP_{\alpha,1}$ is the normal law $(2(1+\alpha)\pi)^{-1/2}e^{-\frac{t^2}{2(1+\alpha)}}1_\R(t)\,\d t$. The orthogonal polynomials $P_n^{(0,1)}(t)$ are Hermite polynomials. 

\item The measure $\qMP_{0,0}$ is the standard Wigner's semicircle law $(1/2\pi)\sqrt{4-t^2}1_{(-2,2)}(t)\,\d t$.  The orthogonal polynomials $P_n^{(0,0)}(t)$ are Chebyshev polynomials of the second kind. 

\item The measure $\qMP_{0,q}$ is the $q$-Gaussian law and the orthogonal polynomials $P_n^{(0,q)}(t)$ are called $q$-Hermite polynomials. 

\item The measure $\qMP_{\alpha,-1}$ is the Bernoulli law 
$(1/2)(\delta_{\sqrt{1+\alpha}}+\delta_{-\sqrt{1+\alpha}})$. 

\item The measure $\qMP_{\alpha,0}$ is a symmetric free Meixner law~\cite{A03,BB06,SY01}. 

\item The measure $\qMP_{-1,1}$ is the delta measure $\delta_0$ and so is trivial, but we can find a nontrivial scaling. If we take  $\alpha = -q^{2\gamma}$ for $  \gamma \in (0,\infty)$ then \eqref{recursion} becomes 
\begin{equation}
t Q_n^{(\gamma,q)}(t) = Q_{n+1}^{(\gamma,q)}(t) + \frac{1}{4}[n]_q [n+2  \gamma-1]_q Q_{n-1}^{(\gamma,q)}(t), 
\end{equation}
where 
\begin{equation}
Q_n^{(\gamma,q)}(t)= \frac{P_n^{(\alpha,q)}(\lambda t)}{\lambda^n},\qquad \lambda =2\sqrt{1-q}. 
\end{equation}
In the limit $q\uparrow1$ the polynomial $Q_n^{(\gamma,q)}(t)$ tends to a polynomial which we denote by $Q_n^{(\gamma)}(t)$  and we get the recursion 
\begin{equation}
t Q_n^{(\gamma)}(t) = Q_{n+1}^{(\gamma)}(t) + \frac{1}{4}n(n+2  \gamma-1)Q_{n-1}^{(\gamma)}(t), 
\end{equation}
which gives Meixner-Pollaczek polynomials corresponding to the symmetric Meixner distribution 
\begin{equation}\label{Classical Meixner}
 \frac{4^\gamma}{2\pi \Gamma(2\gamma)}|\Gamma(  \gamma+\ri t)|^2\,1_\R(t)\, dt. 
\end{equation}
See \cite{KLS10} for further information on Meixner-Pollaczek polynomials. 
\end{enumerate}
\end{example}

Free infinite divisibility of probability measures has been studied by several authors. Of particular interests were probability measures arising from generalized Brownian motions. The moments of typical generalized Brownian motions have some combinatorial interpretations in terms of set partitions \cite{BBLS11, BW01}. Our probability measure $\qMP_{\alpha,q}$ has a combinatorial interpretation too (see Theorem \ref{thm2}) and it extends several known measures. The $q$-Gaussian distribution $\qMP_{0,q}$ is  freely infinitely divisible if and only if $q \in[0,1]$; see Anshelevich et al.\ \cite{ABBL10}. Note that the case $\alpha=0, q=1$ corresponds to the classical Gaussian distribution and its free infinite divisibility was first  proved by Belinschi et al.\ \cite{BBLS11}. The free Meixner distribution $\qMP_{\alpha,0}$ is freely infinitely divisible if and only if $\alpha \in [-1,0]$; see Saitoh and Yoshida \cite{SY01}.  
The measure $\qMP_{-q^{2  \gamma},q}$ with suitable scaling converges to the classical Meixner distribution \eqref{Classical Meixner} as $q \uparrow1$ and the limit distribution is freely infinitely divisible for $  \gamma\leq 1/2$ (the case $  \gamma >1/2$ is still open); see \cite{BH13}. 
Moreover, numerical calculations of free cumulants (thanks to Franz Lehner) support the following conjecture. 
\begin{conjecture}The probability measure $\qMP_{\alpha,q}$ is freely infinitely divisible if $\alpha \in[-1,0]$ and $q \in[0,1]$. 
\end{conjecture}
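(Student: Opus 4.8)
Since $\qMP_{\alpha,q}$ is symmetric and compactly supported, the plan is to attack the conjecture through the free L\'evy--Khintchine characterization of $\boxplus$-infinite divisibility: a compactly supported probability measure $\mu$ is freely infinitely divisible if and only if its Voiculescu transform $\varphi_\mu(z)=F_\mu^{-1}(z)-z$ (with $F_\mu:=1/G_\mu$), a priori defined only near $z=\infty$ in $\C^+$, extends analytically to all of $\C^+$ with $\varphi_\mu(\C^+)\subseteq\overline{\C^-}$; equivalently, the free cumulants $(\kappa_n)_{n\ge1}$ of $\mu$ satisfy that $(\kappa_{n+2})_{n\ge0}$ is the moment sequence of a finite positive measure $\rho$ on $\R$. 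For $\mu=\qMP_{\alpha,q}$ symmetry forces the odd free cumulants to vanish and $\rho$ to be symmetric, so the task reduces to proving that
\[
(\kappa_2,\kappa_4,\kappa_6,\dots)\ \text{is a Stieltjes moment sequence whenever}\ (\alpha,q)\in[-1,0]\times[0,1].
\]
Note that $G_{\qMP_{\alpha,q}}$ is the Jacobi continued fraction built from $\beta_n=0$ and $\gamma_n=[n+1]_q(1+\alpha q^n)=(1-q^{n+1})(1+\alpha q^n)/(1-q)$, which are nonnegative precisely on the conjectured parameter set.

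I would pursue two complementary routes. \emph{Analytic route:} starting from the explicit weight \eqref{eq10} for $\qMP_{\alpha,q}$, compute, via Stieltjes inversion applied to $\varphi_\mu$, the candidate free L\'evy measure $\rho$ on the real line, and then verify (a) that this boundary measure is positive and (b) that $\varphi_\mu$ has neither poles nor positive imaginary part in the interior of $\C^+$. A functional/$q$-difference equation for $G_\mu$ coming from the three-term recurrence \eqref{recursion} --- in the spirit of the $q$-Hermite analysis of Anshelevich, Belinschi, Bo\.zejko and Lehner \cite{ABBL10} --- should be the mechanism that propagates the Herglotz property from a neighbourhood of $\infty$ into $\C^+$. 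The argument should be checked against the two solved edges: $q=0$ is the symmetric free Meixner law, freely infinitely divisible exactly for $\alpha\in[-1,0]$ by Saitoh--Yoshida \cite{SY01}; $\alpha=0$ is the $q$-Gaussian, freely infinitely divisible exactly for $q\in[0,1]$ by \cite{ABBL10}; and the corner $\alpha=0,q=1$ is the classical Gaussian case of \cite{BBLS11}, while the scaling limit at $\alpha=-q^{2\gamma},q\uparrow1$ relates to \cite{BH13}. \emph{Combinatorial route:} extract enough information from the continued fraction (a generating-function identity, or a set-partition formula refining \eqref{mixed moment}) to describe the free cumulants $\kappa_{2m}(\qMP_{\alpha,q})$ explicitly, and then exhibit $(\kappa_{2m+2})_{m\ge0}$ directly as the moment sequence of an explicit symmetric measure.

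The hard part, I expect, is the uniform control of $\varphi_\mu$ over the whole two-parameter region. Already the edge $\alpha=0$ required in \cite{ABBL10} delicate theta-function inequalities, and turning on $\alpha\in[-1,0]$ multiplies the $q$-Hermite weight by the extra factor $1/\big(g(t,\ri\beta;q)g(t,-\ri\beta;q)\big)$ with $\beta=\sqrt{-\alpha}$; one has to rule out, uniformly in $(\alpha,q)$, that this factor destroys positivity of the free L\'evy measure or creates spurious singularities of $\varphi_\mu$ in $\C^+$. A perturbative reduction to the case $\alpha=0$ is tempting, but I do not see how to close it, because there is no evident way to realize $\qMP_{\alpha,q}$ as a free-convolution modification of $\qMP_{0,q}$: the free convolution does not interact simply with the $q$-deformation, which is exactly why the conjecture does not follow formally from the two boundary theorems. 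I would therefore expect the decisive ingredient to be either a new complex-analytic lemma handling the full weight \eqref{eq10} at once, or a closed-form evaluation of the free cumulants that makes the Stieltjes-moment property transparent; the numerical free-cumulant computations mentioned above give strong evidence that such a statement holds.
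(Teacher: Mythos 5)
The statement you are addressing is not a theorem of the paper but an explicit \emph{conjecture}: the authors offer no proof, only the remark that numerical computations of free cumulants (due to Franz Lehner) support it, together with the two solved boundary cases ($q=0$ via Saitoh--Yoshida and $\alpha=0$ via Anshelevich--Belinschi--Bo\.zejko--Lehner). Your proposal, read on its own terms, is likewise not a proof: it is a correct and well-informed survey of the two standard strategies (extend the Voiculescu transform $\varphi_\mu$ analytically to $\C^+$ with values in $\overline{\C^-}$, or show that the shifted free cumulant sequence is the moment sequence of a positive measure), but neither route is carried out. The decisive steps --- positivity of the candidate free L\'evy measure obtained by Stieltjes inversion, absence of singularities of $\varphi_\mu$ in $\C^+$ for the full weight \eqref{eq10}, or a closed form for the free cumulants $\kappa_{2m}(\qMP_{\alpha,q})$ --- are all flagged by you as ``to be verified'' and none is established. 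You are also right, and commendably honest, that the interior of the parameter square does not follow formally from the two edges, since free convolution does not interact simply with the $q$-deformation; but identifying why the easy reduction fails is not a substitute for the missing argument.

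Your framing is nonetheless consistent with the paper and with the literature it cites: the reduction to a Stieltjes/Hamburger moment problem for the even free cumulants of a symmetric compactly supported measure is the correct equivalent formulation, and your identification of the Jacobi parameters $\gamma_n=[n+1]_q(1+\alpha q^n)$ and of the relevant boundary theorems is accurate. So the verdict is simply that the statement remains open both in the paper and in your write-up; what you have produced is a reasonable research program, with the genuine gap being the entire analytic (or combinatorial) core of the argument.
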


\subsection{Pair partitions of type B and moments of Gaussian operator}


Let $[n]$ be the set $\{1,\dots,n\}$. 
A \emph{pair} (or  a pair block) $V$ of a set partition is a block with cardinality 2 and a \emph{singleton} of a set partition is a block with cardinality 1. 

When $n$ is even, a set partition of $[n]$ is called a \emph{pair partition} if every block is a pair. The set of pair partitions of $[n]$ is denoted by $\P_2(n)$. 

A pair $(\pi,f)$ is called a \emph{set partition of $[n]$ of type B} if $\pi$ is a set partition of $[n]$ and $f:\pi\to \{\pm1\}$ is a coloring of the blocks of $\pi$. We denote by $\PB(n)$ the set of all set partitions of $[n]$ of type B. 
We denote by $\PB_{1,2}(n)$ the set of set partitions of $[n]$ of type B such that 
\begin{enumerate}[\rm(1)]
\item each block is a singleton or a pair; 
\item each singleton is necessarily colored by $1$. 
\end{enumerate}
When $n$ is even, we call $(\pi,f) \in \PB(n)$ a \emph{pair partition of $[n]$ of type B} if $\pi$ is a pair partition. The set of pair partitions of $[n]$ of type B is denoted by $\PB_2(n)$.

\begin{remark}
Our definition of set partitions of type B is different from \cite{R97}. 
\end{remark}

Given $\epsilon=(\epsilon(1), \dots, \epsilon(n))\in\{1,\ast\}^n$, let $\P_{1,2;\epsilon}(n)$ be the set of partitions $\pi\in \P_{1,2}(n)$ such that when $\pi$ is written as
$$
\pi =\{\{a_1,b_1\}, \dots, \{a_k,b_k\}, \{c_1\}, \dots, \{c_m\}\},\qquad k,m \in \N \cup\{0\}, a_i<b_i, i\in[k], 
$$
then $\epsilon(a_i)= \ast$ and $\epsilon(b_i)=1$ for all $1 \leq i \leq k$ and $\epsilon(c_i)=\ast$ for all $1 \leq i \leq m$. We also let $\P_{2;\epsilon}(n):=\P_{1,2;\epsilon}(n)\cap \P_2(n)$. Let $\PB_{1,2;\epsilon}(n)$ be the subset of $\PB_{1,2}(n)$ defined by $(\pi,f)\in \PB_{1,2;\epsilon}(n) \Leftrightarrow \pi \in \P_{1,2;\epsilon}(n)$. Let  $\PB_{2;\epsilon}(n):= \PB_2(n)\cap\PB_{1,2; \epsilon}(n)$.  

We introduce some partition statistics. 
Let $\Pair(\pi)$ be the set of pair blocks of a set partition $\pi$ and $\Sing(\pi)$ be the set of singletons of a set partition $\pi$. Let $\NB(\pi,f)$ be the set of negative blocks (i.e.\ blocks colored by $-1$) of a set partition $(\pi,f)$ of type B. For a set partition $\pi$ let $\Cr(\pi)$ be the number of crossings of $\pi$, i.e.\ 
$$
\Cr(\pi)=\#\{\{V,W\} \subset \pi \mid \text{There exist $i,j \in V, k,l\in W$ such that $i<k<j<l$}\}. 
$$
For two blocks $V, W$ of a set partition, we say that $W$ \emph{covers} $V$ if there are $i,j \in W$ such that $i <k <j$ for any $k\in V$. For $(\pi,f)\in\PB_{1,2}(n)$, let $\InS(\pi)$ be the number of pairs of a singleton and a covering block:   
$$
\InS(\pi)=\#\{(V,W) \in \pi \times \pi \mid V \text{~is a singleton,~} W \text{~covers~}V  \}. 
$$
Let $\InNB(\pi,f)$ be the number of pairs of a negative block and a covering block: 
$$
\InNB(\pi,f)=\#\{(V,W) \in \pi \times \pi \mid \#V=2, f(V)=-1, W \text{~covers~}V  \}. 
$$
Let $\SLNB(\pi,f)$ be the number of pairs of a negative block and a \emph{singleton to the left}:  
\begin{align*}
\SLNB(\pi,f)&=\#\{(V,W) \in \pi \times \pi \mid \#W=2, f(W)=-1, \\ 
&\quad\qquad V \text{~is a singleton $\{i\}$}, i < j \text{~for all~} j\in W \}. 
\end{align*}
For a block $V$ of a set partition $\pi$, let $\Cov(V)$ be the number of blocks of $\pi$ which cover  $V$ and let $\SL(V)$ be the number of singletons which are placed to the left of $V$. More precisely $\Cov(V),\SL(V)$ should be denoted as $\Cov(V;\pi), \SL(V;\pi)$, but we use the simpler notations. 

A set partition $\pi$ is \emph{noncrossing} if $\Cr(\pi)=0$. The set of noncrossing partitions of $[n]$ is denoted by $\NC(n)$ and the set of noncrossing pair partitions is denoted by $\NC_2(n)$ when $n$ is even. 
A block of a noncrossing partition is \emph{inner} if it is covered by another block. A block of a noncrossing partition is \emph{outer} if it is not inner. 
For $\pi\in \NC(n)$, let $\In(\pi)$ be the number of inner blocks of $\pi$ and let $\Out(\pi)$ be the number of outer blocks. Note that $\In(\pi)+\Out(\pi)$ equals the number of blocks of $\pi$.

\begin{theorem}\label{lem101}
For any $x_1,\dots,x_n \in H_\R$ and any $\epsilon=(\epsilon(1), \dots, \epsilon(n))\in\{1,\ast\}^n$, we have
\begin{equation}\label{formula101}
\begin{split}
&\B^{\epsilon(n)}(x_{n})\cdots \B^{\epsilon(1)}(x_1)\Omega \\
&\qquad= \sum_{(\pi,f)\in\PB_{1,2;\epsilon}(n)}  \alpha^{\NB(\pi,f)}q^{\Cr(\pi)+ \InS(\pi)+2 \InNB(\pi,f) + 2\SLNB(\pi,f)} \\
&\qquad\quad \times \prod_{\substack{\{i,j\} \in \Pair(\pi)\\ f(\{i,j\})=1} }\langle x_i, x_j\rangle \prod_{\substack{\{i,j\} \in \Pair(\pi)\\ f(\{i,j\})=-1}} \langle x_i, \overline{x_j}\rangle
\bigotimes_{i\in \Sing(\pi)} x_i, 
\end{split}
\end{equation}
where $\bigotimes_{i\in V} x_i$ denotes the tensor product $x_{v_1}\otimes \cdots \otimes x_{v_m}$ when $V$ is written as $V = \{v_1,\dots,v_m\} \subset \N$, $v_1<\cdots < v_m.$ If $V=\emptyset$, then we understand that  $\bigotimes_{i\in V} x_i=\Omega.$ Moreover, we may write the formula in terms of set partitions: 
\begin{align}\label{formula102}
&\B^{\epsilon(n)}(x_n)\cdots \B^{\epsilon(1)}(x_1)\Omega \notag\\
&\quad=\sum_{\pi\in \P_{1,2;\epsilon}(n)} q^{\Cr(\pi)+\InS(\pi)} \left(\prod_{\substack{\{i,j\} \in \Pair(\pi)} }\left(\langle x_i,x_j\rangle + \alpha q^{2 \Cov(\{i,j\}) +2 \SL(\{i,j\})}\langle x_i,\overline{x_j}\rangle\right)\right) \bigotimes_{i\in \Sing(\pi)} x_i. 
\end{align}
\end{theorem}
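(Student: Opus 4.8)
The plan is to prove the Wick-type formula \eqref{formula101} by induction on $n$, peeling off one operator at a time from the left. First I would set up the induction: assume \eqref{formula101} holds for $n-1$ and all $\epsilon$-strings of length $n-1$, apply $\B^{\epsilon(n)}(x_n)$ to both sides, and track how each summand transforms. The computation splits according to whether $\epsilon(n) = \ast$ (creation) or $\epsilon(n) = 1$ (annihilation). In the creation case, $\B^\ast(x_n) = \r^\ast(x_n)$ simply appends $x_n$ to the tensor, which corresponds to adding a new singleton $\{n\}$ (necessarily colored $+1$) at the right end of each partition; since the new singleton is outer, it creates no new crossings, no new covering relations, and lies to the right of everything, so none of the statistics $\Cr, \InS, \InNB, \SLNB, \NB$ change, matching the right-hand side for the string $\epsilon$ of length $n$. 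The annihilation case is the substantive one: here I would invoke Theorem \ref{thm1} to write $\B(x_n) = \r_q(x_n) + \alpha\, \ell_q(\overline{x_n})\, q^{N-1}$, and apply each piece to the tensor $\bigotimes_{i \in \Sing(\pi)} x_i$ coming from a partition $(\pi,f) \in \PB_{1,2;\epsilon}(n-1)$.

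The key bookkeeping step is then to match the contraction produced by $\r_q$ and by $\ell_q q^{N-1}$ against singletons of $\pi$ with the corresponding modification of the type-B partition. Suppose $\Sing(\pi) = \{c_1 < \cdots < c_m\}$ so the tensor is $x_{c_1} \otimes \cdots \otimes x_{c_m}$. The operator $\r_q(x_n)$ contracts $x_n$ against $x_{c_k}$ with weight $q^{m-k} \langle x_n, x_{c_k}\rangle$; this corresponds to forming the new pair block $\{c_k, n\}$ colored $+1$, and I must check that the exponent of $q$ gained is exactly the increase in $\Cr + \InS$. The new pair $\{c_k, n\}$ crosses precisely those existing blocks $W$ of $\pi$ that "separate" $c_k$ from $n$, i.e. have one leg strictly between $c_k$ and $n$; and it covers precisely the singletons $c_{k+1}, \dots, c_m$ lying between them — there are $m - k$ of those, and each such singleton, now covered by a new block, contributes $+1$ to $\InS$. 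One has to verify carefully that the count of crossings newly created equals $q^{m-k}$ divided appropriately — actually the cleanest route is to note that in the previous step all of $c_{k+1},\dots,c_m$ were singletons hence contributed nothing to $\Cr$, and that every block of $\pi$ with a leg between $c_k$ and $n=c_m$ is crossed; combined with the $\InS$ increments this should reassemble to $q^{m-k}$ (the legs of $\pi$ between $c_k$ and $n$ are exactly the remaining singletons $c_{k+1},\dots,c_m$, since $n$ is to the right of everything). For the $\ell_q(\overline{x_n}) q^{N-1}$ piece: $q^{N-1}$ contributes $q^{m-1}$ (the tensor has degree $m$), and $\ell_q$ contracts $\overline{x_n}$ against $x_{c_k}$ with weight $q^{k-1} \langle \overline{x_n}, x_{c_k}\rangle = q^{k-1}\langle x_n, \overline{x_{c_k}}\rangle$ by self-adjointness of the involution; this corresponds to forming the pair $\{c_k, n\}$ colored $-1$, contributing $\alpha$ to the power of $\alpha$ (one new negative block), and I must check that $q^{(m-1)+(k-1)}$ accounts for the increases in $\Cr + \InS + 2\InNB + 2\SLNB$ produced by inserting a negative block there.

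The main obstacle will be this last verification of the exponent in the negative-block case: the factor $2\InNB$ counts pairs (negative block, covering block), so the new negative pair $\{c_k,n\}$ contributes twice the number of blocks of $\pi$ covering it, and $2\SLNB$ contributes twice the number of singletons of $\pi$ to its left. One must reconcile $q^{(m-1)+(k-1)} = q^{(k-1) + (m-1)}$ with $q^{\Delta\Cr + \Delta\InS + 2\Delta\InNB + 2\Delta\SLNB}$. The singletons strictly to the left of the new block are $c_1,\dots,c_{k-1}$, giving $\Delta\SLNB = k-1$; the singletons covered by the new block are $c_{k+1},\dots,c_m$, giving $\Delta\InS = m - k$ from those (plus any change to $\InNB$ coming from blocks that now cover the new negative block, i.e. $\Delta\InNB$ equals the number of blocks of $\pi$ covering $\{c_k,n\}$); and $\Delta\Cr$ is the number of blocks of $\pi$ crossing $\{c_k,n\}$. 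The identity to be checked is therefore $2(k-1) + (m-k) + \Delta\Cr + 2\,\Delta\InNB = (k-1)+(m-1)$, i.e. $\Delta\Cr + 2\,\Delta\InNB + (k-1) = m - 1$; since $n=c_m$ sits at the far right, every leg of $\pi$ strictly between $c_k$ and $n$ belongs to a block that is either crossed by $\{c_k,n\}$ (if it has its other leg outside $(c_k,n)$) or covered by $\{c_k,n\}$, and the legs between are exactly $c_{k+1},\dots,c_m$ together with legs of pair blocks — a short case analysis (noting singletons and covering blocks are counted with the right multiplicity, and the $2\InNB$ term absorbs the fact that a covering block contributes two legs) closes it. Once \eqref{formula101} is established, formula \eqref{formula102} follows by grouping, for each fixed uncolored partition $\pi \in \P_{1,2;\epsilon}(n)$, the two colorings $f(\{i,j\}) = \pm 1$ of each pair block: the $+1$ coloring gives $\langle x_i, x_j\rangle$ with extra $q$-weight $0$, the $-1$ coloring gives $\alpha \langle x_i, \overline{x_j}\rangle$ with extra weight $\alpha \cdot q^{2\InNB + 2\SLNB}$ localized at that block, which for the pair $\{i,j\}$ is exactly $\alpha\, q^{2\Cov(\{i,j\}) + 2\SL(\{i,j\})}$, and summing the $2^{|\Pair(\pi)|}$ colorings factors as the stated product over pair blocks.
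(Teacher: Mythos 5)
Your overall strategy is exactly the paper's: induct on $n$, use Theorem \ref{thm1} to split $\B(x_n)=\r_q(x_n)+\alpha\,\ell_q(\overline{x_n})q^{N-1}$, match each contraction against a singleton $c_k$ with the creation of a pair block $\{c_k,n\}$ colored $+1$ or $-1$, and then deduce \eqref{formula102} from \eqref{formula101} by expanding the product over the $2^{\#\Pair(\pi)}$ colorings. But the central bookkeeping — verifying that the change in $\Cr+\InS+2\InNB+2\SLNB$ equals the $q$-exponent produced by the operator — is carried out incorrectly, and the identities you propose to check are false in general.

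What you miss is that forming the pair $\{c_k,n\}$ \emph{removes $c_k$ from the set of singletons}, which produces two negative contributions that are essential to the cancellation: $\InS$ drops by $u$, the number of blocks of $\pi$ covering $c_k$ (these are precisely the blocks the new pair crosses, so this cancels $\Delta\Cr=+u$), and $\SLNB$ drops by $t$, the number of negative blocks lying entirely between $c_k$ and $n$ (these are precisely the negative blocks the new pair covers, so $-2t$ cancels $2\Delta\InNB=+2t$). You record only the positive increments $\Delta\InS=m-k$ and $\Delta\SLNB=k-1$. Consequently, in the positive-coloring case your parenthetical claim that the legs of $\pi$ between $c_k$ and $n$ are exactly the remaining singletons is simply false (pair blocks of $\pi$ may have legs there), and in the negative-coloring case your "identity to be checked" reduces to $\Delta\Cr+2\Delta\InNB=0$, which fails whenever $c_k$ is covered by some block or a negative block sits between $c_k$ and $n$. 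You also misidentify $\Delta\InNB$ as the number of blocks covering the new pair — that number is $0$, since $n$ is the rightmost point; the actual contribution is $+t$ from the negative blocks newly covered by $\{c_k,n\}$. With the two decrements restored the totals become $m-k$ for the positive coloring and $(m-1)+(k-1)$ for the negative one, matching the weights of $\r_q$ and $\ell_q q^{N-1}$; without them the induction does not close.
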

\begin{remark}
If $\#\{i\in[j] \mid \epsilon(i)=1\}>\#\{i\in[j] \mid \epsilon(i)=\ast\}$ for some $j\in[n]$, then  we have $\B^{\epsilon(n)}(x_{n})\cdots \B^{\epsilon(1)}(x_1)\Omega=0$. This case is also covered by \eqref{formula101} if we understand the sum over the empty set is 0 since $\PB_{1,2;\epsilon}(n)=\emptyset$ in this case. 
\end{remark}
\begin{proof}
The proof is given by induction. When $n=1$,  $\B(x_1)\Omega=0$ and $\B^\ast(x_1)\Omega=x_1$ and hence the formula is true. Suppose that the formula is true for $n=k$. Then for any $\epsilon\in\{1,\ast\}^{k+1}$, we get  
\begin{align*}
&\B^{\epsilon(k+1)}(x_{k+1})\cdots \B^{\epsilon(1)}(x_1)\Omega \\
&\quad= \sum_{(\pi,f)\in\PB_{1,2;\epsilon|_{[k]}}(k)}  \alpha^{\NB(\pi,f)}q^{\Cr(\pi)+ \InS(\pi)+ 2 \InNB(\pi,f) + 2\SLNB(\pi,f)} \\
&\qquad \times \prod_{\substack{\{i,j\} \in \Pair(\pi)\\ f(\{i,j\})=1} }\langle x_i, x_j\rangle \prod_{\substack{\{i,j\} \in \Pair(\pi)\\ f(\{i,j\})=-1}} \langle x_i, \overline{x_j}\rangle \, 
\B^{\epsilon(k+1)}(x_{k+1})\left(\bigotimes_{i\in \Sing(\pi)} x_i\right). 
\end{align*}
The operator $\B^{\epsilon(k+1)}(x_{k+1})$ equals $\r^\ast(x_{k+1})$ if $\epsilon(k+1)=\ast$ and $\r_q(x_{k+1})+ \alpha  \ell_{q}(\overline{x_{k+1}})q^{N-1}$ if $\epsilon(k+1)=1$ from Theorem \ref{thm1}. We will show that the action of $\B^{\epsilon(k+1)}(x_{k+1})$ corresponds to the inductive pictorial description of set partitions of type B. 

We fix $(\pi,f)\in \PB_{1,2;\epsilon|_{[k]}}(k)$ and suppose that $\pi$ has singletons  $k_1<\cdots <k_p <i < m_1 <\cdots <m_r$ (we understand that $p=0$ ($r=0$, respectively) when there is no singleton to the left (right, respectively) of $i$), pair blocks $U_1,\dots, U_s$ with color $1$ to the right of $i$, pair blocks $V_1,\dots, V_t$ with color $-1$ to the right of $i$ and pair blocks $W_1,\dots, W_u$ which cover $i$. There may be pair blocks to the left of $i$, but they do not matter. Note that when there is no singleton, the arguments below can be modified easily. 

Case 1. If $\epsilon(k+1)=\ast$, then the operator $\r^\ast(x_{k+1})$ acts on the tensor product, putting $x_{k+1}$ on the right. This operation pictorially corresponds to adding the singleton $\{k+1\}$ (with color 1) to $(\pi,f)\in\PB_{1,2;\epsilon|_{[k]}}(k)$, to yield the new type B partition $(\tilde{\pi},\tilde{f})\in\PB_{1,2;\epsilon}(k+1)$. This map $(\pi,f)\mapsto (\tilde{\pi},\tilde{f})$ does not change the numbers $\NB, \Cr, \InNB, \SLNB$ or $\InS$, which is compatible with the fact that the action of $\r^\ast(x_{k+1})$ does not change the coefficient. 
Note that if $\epsilon(k+1)=\ast$, then any $(\sigma,g) \in \PB_{1,2;\epsilon|_{[k+1]}}(k+1)$ has the singleton $\{k+1\}$. Hence the formula \eqref{formula101} holds when $n=k+1$ and $\epsilon(k+1)=\ast$.

Case 2. If $\epsilon(k+1)=1$, then we have two cases. 

Case 2a. If $\r_q(x_{k+1})$ acts on the tensor product, then new $p+r+1$ terms appear by using \eqref{rq}. In the $i^{\rm th}$ term the inner product $\langle x_{k+1},x_i\rangle$ appears with coefficient $q^r$. Pictorially this corresponds to getting a set partition $(\tilde{\pi}, \tilde{f}) \in \PB_{1,2;\epsilon|_{[k+1]}}(k+1)$  by adding $k+1$ to $(\pi,f)$ and creating the pair $\{i, k+1\}$ with color $1$. This pair crosses the blocks $W_1, \dots, W_u$ and so increases the crossing number by $u$ but decreases the number of inner singletons by $u$ because originally $i$ was the inner singleton of $W_1,\dots, W_u$. Now the new inner singletons $\{m_1\}, \dots, \{m_r\}$ and new inner negative blocks $V_1,\dots, V_t$ appear. Because $i$ is not a singleton in $(\tilde{\pi}, \tilde{f})$, the number of singletons left to negative blocks decreases by $t$. Altogether we have: 
$\Cr(\tilde{\pi})=\Cr(\pi)+u$, $\InS(\tilde{\pi})=\InS(\pi)-u+r$, $\InNB(\tilde{\pi},\tilde{f})=\InNB(\pi)+t$, $\SLNB(\tilde{\pi}, \tilde{f})= \SLNB(\pi,f)-t$ and $\NB(\tilde{\pi},\tilde{f})=\NB(\pi,f)$. So the exponent of $q$ increases by $r$. This factor $q^r$ is exactly the factor appearing in \eqref{rq} when $\r_q(x_{k+1})$ acts on $x_i$.

Case 2b. If $\alpha  \ell_{q}(\overline{x_{k+1}})q^{N-1}$ acts on the tensor product, then new $p+r+1$ terms appear by using \eqref{lq}. In the $i^{\rm th}$ term the inner product $\langle\overline{x_{k+1}},x_i\rangle$ appears with coefficient $\alpha q^{p+(p+r+1)-1}$ (note here that $p+r+1=\#\Sing(\pi)$). Pictorially this means that the new pair $\{i,k+1\}$ is created with color $-1$. Similarly to Case 2a, we count the change of numbers and get 
$\Cr(\tilde{\pi})=\Cr(\pi)+u$, $\InS(\tilde{\pi})=\InS(\pi)-u+r$, $\InNB(\tilde{\pi},\tilde{f})=\InNB(\pi)+t$, $\SLNB(\tilde{\pi}, \tilde{f})= \SLNB(\pi,f)-t+p$ and $\NB(\tilde{\pi},\tilde{f})=\NB(\pi,f)+1$. Altogether, when moving from $(\pi,f)$ to $(\tilde{\pi}, \tilde{f})$, the exponent of $\alpha$ increases by $1$ and the exponent of $q$ increases by $2p+r$, which coincides with the coefficient appearing in the action of $\alpha  \ell_{q}(\overline{x_{k+1}})q^{N-1}$, creating the inner product $\langle  \overline{x_{k+1}},x_i\rangle$. 

Note that as $(\pi,f)$ runs over $\PB_{1,2;(\epsilon(1),\dots,\epsilon(k))}(k)$, every set partition $(\tilde{\pi},\tilde{f}) \in \PB_{1,2;(\epsilon(1),\dots,\epsilon(k),1)}(k+1)$ appears exactly once in one of Case 2a and Case 2b. Therefore in Case 2, the pictorial inductive step and the actual action of $\B(x_{k+1})$ both create the same terms with the same coefficients, and hence the formula \eqref{formula101} is true when $n=k+1$ and $\epsilon(k+1)=1$. Case 1 and Case 2 show by induction that the formula \eqref{formula101} holds for all $n \in \N$.

We show the formula \eqref{formula102}. 
For $\pi =\{V_1, \dots, V_k, S_1, \dots, S_m\} \in \P_{1,2}(n)$ where $V_i=\{a_i,b_i\}$ are pairs and $S_j$ are singletons, we have 
\begin{align*}
&\prod_{i=1}^k \left(\langle x_{a_i},x_{b_i}\rangle + \alpha q^{2 (\Cov(V_i) + \SL(V_i)  )}\langle x_{a_i},\overline{x_{b_i}}\rangle\right)\\
 &= 
\sum_{(n_1,\dots,n_k)\in\{1,-1\}^k}\prod_{i=1}^k \langle x_{a_i},x_{b_i}\rangle ^{\delta_{n_i,1}} \left(\alpha q^{2 (\Cov(V_i) + \SL(V_i)  )}\langle x_{a_i},\overline{x_{b_i}}\rangle\right)^{\delta_{n_i,-1}} \\
&= 
\sum_{(n_1,\dots,n_k)\in\{1,-1\}^k}\left(\alpha^{\#\{i \in[k] \mid n_i=-1\}}q^{2\sum_{i=1}^k (\Cov(V_i) + \SL(V_i)  )\delta_{n_i,-1}}\prod_{i\in[k],n_i=1} \langle x_{a_i},x_{b_{i}}\rangle\prod_{i\in[k], n_i=-1} \langle x_{a_i},\overline{x_{b_i}}\rangle\right) \\ 
&= 
\sum_{f}\alpha^{\NB(\pi,f)}q^{2\InNB(\pi,f)+2\SLNB(\pi,f)}\prod_{\{i,j\}\in\pi,f(\{i,j\})=1} \langle x_i,x_j\rangle\prod_{\{i,j\}\in\pi,f(\{i,j\})=-1} \langle x_{i},\overline{x_j}\rangle, 
\end{align*}
where $f$ is defined by $f(V_i)=n_i$ and $f(S_j)=1$ and hence $f$ runs over the set of all colorings of $\pi$. Note here that $\NB(\pi,f)=\#\{i\in[k]\mid n_i=-1\}$, $\InNB(\pi,f)=\sum_{i=1}^k \Cov(V_i)\delta_{n_i,-1}$ and $\SLNB(\pi,f)=\sum_{i=1}^k \SL(V_i)\delta_{n_i,-1}$. 
\end{proof}
\begin{corollary}\label{thm2} 
Suppose that $x_1,\dots,x_n \in H_\R$ and $\epsilon\in\{1,\ast\}^n$. 
\begin{enumerate}[\rm(1)] 
\item 
\begin{align*}
&\langle\Omega, \B^{\epsilon(n)}(x_n)\cdots \B^{\epsilon(1)}(x_1)\Omega\rangle_{\alpha,q}\\
&\quad=
\sum_{(\pi,f) \in \PB_{2;\epsilon}(n)} \alpha^{\NB(\pi,f)}q^{\Cr(\pi)+2 \InNB(\pi,f)}\prod_{\substack{\{i,j\} \in \pi \\ f(\{i,j\})=1} }\langle x_i, x_j\rangle \prod_{\substack{\{i,j\} \in \pi\\ f(\{i,j\})=-1}} \langle x_i, \overline{x_j}\rangle \\
&\quad =
\sum_{\pi\in \P_{2;\epsilon}(n)} q^{\Cr(\pi)} \prod_{\substack{\{i,j\} \in\pi} }\left(\langle x_i,x_j\rangle + \alpha q^{2\Cov(\{i,j\})}\langle x_i,\overline{x_j}\rangle\right). 
\end{align*}
\item 
\begin{align*}
&\langle\Omega, \G(x_n)\cdots \G(x_1)\Omega\rangle_{\alpha,q}=\langle\Omega, \G(x_1)\cdots \G(x_n)\Omega\rangle_{\alpha,q}\\
&\quad=
\displaystyle\sum_{(\pi,f) \in \PB_2(n)} \alpha^{\NB(\pi,f)}q^{\Cr(\pi)+2 \InNB(\pi,f)}\prod_{\substack{\{i,j\} \in \pi \\ f(\{i,j\})=1} }\langle x_i, x_j\rangle \prod_{\substack{\{i,j\} \in \pi\\ f(\{i,j\})=-1}} \langle x_i,\overline{x_j}\rangle \\
&\quad=
\sum_{\pi\in \P_{2}(n)} q^{\Cr(\pi)} \prod_{\substack{\{i,j\} \in \pi} }\left(\langle x_i,x_j\rangle + \alpha q^{2 \Cov(\{i,j\})}\langle x_i,\overline{x_j}\rangle\right). 
\end{align*}
\end{enumerate}
Note that the sum over the empty set is understood to be 0. 
\end{corollary}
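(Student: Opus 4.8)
The plan is to deduce the corollary directly from Theorem \ref{lem101} by pairing with the vacuum vector. For part (1), start from the explicit formula \eqref{formula101} for $\B^{\epsilon(n)}(x_n)\cdots \B^{\epsilon(1)}(x_1)\Omega$: taking the inner product $\langle \Omega, \cdot\,\rangle_{\alpha,q}$ against this expression, the tensor $\bigotimes_{i\in\Sing(\pi)} x_i$ survives the pairing (and contributes $1$) if and only if $\Sing(\pi)=\emptyset$, i.e.\ if and only if $\pi\in\P_2(n)$; otherwise the term is orthogonal to $\Omega$. Thus the sum collapses from $(\pi,f)\in\PB_{1,2;\epsilon}(n)$ to $(\pi,f)\in\PB_{2;\epsilon}(n)$. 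On this subset $\InS(\pi)=0$ and $\SLNB(\pi,f)=0$ (there are no singletons at all), so the exponent of $q$ in \eqref{formula101} reduces to $\Cr(\pi)+2\InNB(\pi,f)$, giving the first displayed equality. The second equality is obtained in exactly the same way by specializing \eqref{formula102}: restricting to $\pi\in\P_{2;\epsilon}(n)$ kills $\InS(\pi)$ and kills the $\SL(\{i,j\})$ term (since $\pi$ has no singletons $\SL(V)=0$ for every block $V$), leaving $q^{\Cr(\pi)}\prod_{\{i,j\}\in\pi}\bigl(\langle x_i,x_j\rangle + \alpha q^{2\Cov(\{i,j\})}\langle x_i,\overline{x_j}\rangle\bigr)$. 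Alternatively, one checks that the two right-hand sides agree by the same factoring identity used at the end of the proof of Theorem \ref{lem101} (summing over colorings $f$ of a fixed pair partition $\pi$).

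For part (2), expand each $\G(x_j) = \B(x_j)+\B^\ast(x_j)$ and use multilinearity:
$$
\langle\Omega, \G(x_n)\cdots\G(x_1)\Omega\rangle_{\alpha,q} = \sum_{\epsilon\in\{1,\ast\}^n} \langle\Omega, \B^{\epsilon(n)}(x_n)\cdots\B^{\epsilon(1)}(x_1)\Omega\rangle_{\alpha,q}.
$$
By part (1) each summand equals $\sum_{\pi\in\P_{2;\epsilon}(n)} q^{\Cr(\pi)}\prod_{\{i,j\}\in\pi}(\cdots)$, and the sets $\P_{2;\epsilon}(n)$ over all $\epsilon$ form a partition of $\P_2(n)$: indeed, every pair partition $\pi\in\P_2(n)$ determines a unique $\epsilon$, namely $\epsilon(b)=1$ and $\epsilon(a)=\ast$ for each block $\{a,b\}$ with $a<b$. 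Hence the sum over $\epsilon$ of the sums over $\P_{2;\epsilon}(n)$ is exactly the single sum over $\P_2(n)$, which is the claimed formula (and likewise in the type B language, summing $\PB_{2;\epsilon}(n)$ over $\epsilon$ gives $\PB_2(n)$). The equality $\langle\Omega,\G(x_n)\cdots\G(x_1)\Omega\rangle_{\alpha,q}=\langle\Omega,\G(x_1)\cdots\G(x_n)\Omega\rangle_{\alpha,q}$ follows because the final RHS is manifestly symmetric under reversing the order $x_1,\dots,x_n$: reversal is an involution on $\P_2(n)$ that preserves the number of crossings and preserves $\Cov$ of each block, and since the $x_i\in H_\R$ the factor $\langle x_i,x_j\rangle+\alpha q^{2\Cov(\{i,j\})}\langle x_i,\overline{x_j}\rangle$ is unchanged when $i$ and $j$ are swapped within a block.

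The only genuinely delicate point is bookkeeping: one must verify carefully that $\InS$ and $\SLNB$ (resp.\ the $\SL(\{i,j\})$ contribution) really do vanish identically on pair partitions, and that the map $\pi\mapsto\epsilon$ described above is the inverse of the construction $\epsilon\mapsto\P_{2;\epsilon}(n)$, so that no term is counted twice or omitted when the $\epsilon$-sum is reassembled. Both are immediate from the definitions but should be stated explicitly. Everything else is a routine specialization of the already-proven Theorem \ref{lem101}.
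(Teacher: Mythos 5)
Your proof is correct and follows essentially the same route as the paper, whose own proof simply states that (1) is clear from Theorem \ref{lem101} and that (2) follows by summing over all $\epsilon$. You have merely spelled out the details (vanishing of $\InS$, $\SLNB$ and $\SL$ on pair partitions, and the fact that the sets $\P_{2;\epsilon}(n)$ partition $\P_2(n)$), all of which are accurate.
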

\begin{proof}
(1) is clear and (2) follows from (1) by taking the sum over all $\epsilon$. 
\end{proof}

\begin{corollary}\label{cor13} Assume that $x_i \in H_\R$ and $\overline{x_i}=x_i$ for $i=1,\dots,2m$. 
\begin{enumerate}[\rm(1)]
\item For $\alpha=0$, we recover the $q$-deformed formula for moments \cite[Proposition 2]{BS91}: 
\begin{align*}
\langle\Omega, G_{0,q}(x_1)\cdots G_{0,q}(x_{2 m})\Omega\rangle_{0,q}
=\sum_{\pi \in \P_2(2 m)} q^{\Cr(\pi)}\prod_{\{i,j\} \in\pi }\langle x_i, x_j\rangle. 
\end{align*}
\item For $q=0$, let $t=1/(1+\alpha)$ (in this case $P_{\alpha,0}^{(n)}=1+\alpha \pi_0$ is positive definite for $\alpha \in (-1, \infty)$ and so we may consider $t\in(0,\infty)$) and let $\tilde{G}_t(x)=\sqrt{t}G_{\alpha,0}(x)$. We recover the $t$-deformed formula for moments \cite[Theorem 7.3]{BW01}: 
\begin{align*}
\langle\Omega, \tilde{G}_t(x_1)\cdots \tilde{G}_t(x_{2 m}), \Omega\rangle_{(1-t)/t,0}
=\sum_{\pi \in \NC_2(2 m)} t^{\In(\pi)}\prod_{\{i,j\} \in\pi }\langle x_i, x_j\rangle. 
\end{align*}
\end{enumerate}
\end{corollary}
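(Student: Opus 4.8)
The plan is to derive both identities by specialising the second expression in Corollary~\ref{thm2}(2),
\[
\langle\Omega, \G(x_1)\cdots \G(x_{2m})\Omega\rangle_{\alpha,q}=\sum_{\pi\in \P_{2}(2m)} q^{\Cr(\pi)} \prod_{\{i,j\} \in \pi}\Bigl(\langle x_i,x_j\rangle + \alpha q^{2 \Cov(\{i,j\})}\langle x_i,\overline{x_j}\rangle\Bigr),
\]
and invoking the hypothesis $\overline{x_i}=x_i$.

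For part (1) I would simply put $\alpha=0$. Then the $\alpha$-term in each block factor disappears, and the right-hand side collapses to $\sum_{\pi\in\P_2(2m)}q^{\Cr(\pi)}\prod_{\{i,j\}\in\pi}\langle x_i,x_j\rangle$, which is exactly the Bo\.zejko--Speicher moment formula \cite{BS91}. No assumption on the involution is needed for this case.

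For part (2) I would put $q=0$ and read all powers of $q$ with the convention $0^0=1$, i.e. $0^k=\delta_{k,0}$, consistent with the convention $P^{(n)}_{0,0}=I$ fixed in Section~2.1. The factor $q^{\Cr(\pi)}=0^{\Cr(\pi)}$ then annihilates every crossing partition, so the sum restricts to $\NC_2(2m)$. Inside a fixed noncrossing pair partition, the factor $q^{2\Cov(\{i,j\})}=0^{2\Cov(\{i,j\})}$ vanishes unless $\{i,j\}$ is an outer block (covered by no other block); using $\overline{x_i}=x_i$ the block factor thus equals $(1+\alpha)\langle x_i,x_j\rangle$ for outer blocks and $\langle x_i,x_j\rangle$ for inner blocks. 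Multiplying over all blocks gives $(1+\alpha)^{\Out(\pi)}\prod_{\{i,j\}\in\pi}\langle x_i,x_j\rangle$. Next I substitute $\alpha=(1-t)/t$, so that $1+\alpha=1/t$, and pass from $\G$ to $\tilde G_t(x)=\sqrt{t}\,G_{\alpha,0}(x)$, which multiplies the whole $2m$-th moment by $t^m$. Since a pair partition of $[2m]$ has exactly $m$ blocks, $\In(\pi)+\Out(\pi)=m$, so the prefactor becomes $t^m (1/t)^{\Out(\pi)}=t^{\,m-\Out(\pi)}=t^{\In(\pi)}$, yielding the Bo\.zejko--Wysocza\'nski formula \cite{BW01}.

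The argument is essentially algebraic substitution into Corollary~\ref{thm2}, so there is no genuine analytic obstacle. The only points demanding care are the bookkeeping of the degenerate powers of $q$ at $q=0$ — in particular that $q^{2\Cov(\{i,j\})}\to 0$ eliminates precisely the covered blocks, leaving the clean factor $(1+\alpha)^{\Out(\pi)}$ — and the elementary identity $\In(\pi)+\Out(\pi)=m$ for $\pi\in\NC_2(2m)$, which is what matches the exponent of $t$ in the target formula.
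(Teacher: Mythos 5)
Your proposal is correct and follows essentially the same route as the paper: part (1) by setting $\alpha=0$ in Corollary \ref{thm2}(2), and part (2) by using the convention $0^0=1$ to restrict the sum to $\NC_2(2m)$, identifying the block factor as $1+\alpha$ exactly on outer blocks, and then converting $t^m(1+\alpha)^{\Out(\pi)}=t^{m-\Out(\pi)}$ into $t^{\In(\pi)}$ via $\In(\pi)+\Out(\pi)=m$. The bookkeeping of the degenerate powers of $q$ and the rescaling by $\sqrt{t}$ are all handled as in the paper's own argument.
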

\begin{proof}
(1) is clear. 

(2)\,\, 
When $q=0$, the non zero contributions in Corollary \ref{thm2}(2) come only when $\pi$ is noncrossing and inner blocks are colored by 1. Given $V\in \pi \in \NC_2(2 m)$, $1+\alpha 0^{2\Cov(V)}$ equals $1+\alpha$ if $V$ is outer and 1 otherwise. Hence
\begin{align*}
\langle\Omega, G_{\alpha,0}(x_1)\cdots G_{\alpha,0}(x_{2 m})\Omega\rangle_{\alpha,0}
&=\sum_{\pi \in \NC_2(2 m)} \left(1+\alpha\right)^{\Out(\pi)}\prod_{\{i,j\} \in\pi }\langle x_i, x_j
\rangle \\
&=\sum_{\pi \in \NC_2(2 m)} t^{-\Out(\pi)}\prod_{\{i,j\} \in\pi }\langle x_i, x_j\rangle.  
\end{align*}
The conclusion follows by multiplying this formula by $t^m$ and by using the relation $m-\Out(\pi)=\In(\pi)$. 
\end{proof}
\begin{remark} 
It seems that the $t$-deformation of the classical Gaussian operator proposed in \cite[Sections 8,9]{BW01} is not contained in our family $\G(x)$. 
\end{remark}

\subsection{Traciality of the vacuum state} 
Let $\A$ be the von Neumann algebra generated by $\{\G(x)\mid x\in H_\R\}$ acting on the completion of $\F$ regarding the inner product $\langle\cdot,\cdot\rangle_{\alpha,q}$. 
\begin{proposition} Let $\alpha,q \in (-1,1)$.  Suppose that $\text{\rm dim}(H_\R) \geq2$. Then the vacuum state is a trace on $\A$  if and only if $\alpha=0$. 
\end{proposition}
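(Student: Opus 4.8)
The statement is an if-and-only-if, so there are two directions. The easy direction is that $\alpha = 0$ implies traciality: when $\alpha = 0$ the $(\alpha,q)$-Fock space is exactly Bo\.zejko--Speicher's $q$-Fock space, and the traciality of the vacuum state there is classical (see \cite{BKS97}), so nothing new is needed. The substance is the converse: if $\text{dim}(H_\R) \ge 2$ and $\alpha \ne 0$, then the vacuum state fails to be a trace. My plan is to exhibit two explicit vectors $x, y \in H_\R$ and a short word in $\G(x), \G(y)$ whose vacuum expectation is not invariant under cyclic rotation, using the moment formula in Corollary \ref{thm2}(2).

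\emph{Setting up the counterexample.} Since $\text{dim}(H_\R) \ge 2$, I can choose $x, y \in H_\R$ with $\langle x, y\rangle = 0$ and both nonzero. The involution $\bar{\cdot}$ complicates matters, so the cleanest move is to pick $x,y$ interacting with the involution in a controlled way; for instance choose an orthonormal pair $e, e'$ with $\overline{e} = e'$ and $\overline{e'} = e$ (available in the standard basis model, $\overline{e_i} = e_{-i}$), and set $x = e + e'$, $y = i(e - e')$ so that $\bar{x} = x$, $\bar{y} = -y$, $\langle x,y\rangle = 0$, $\langle x, \bar{y}\rangle = 0$, $\langle x,\bar x\rangle = 2$, $\langle y, \bar y\rangle = -2$ — or simpler still, since the state is real on words in self-adjoint operators, just take $x$ with $\bar{x} = x$ and $y$ with $\bar{y} = -y$, orthogonal. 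Then compute a degree-$4$ mixed moment, e.g. compare $\langle \Omega, \G(x)\G(x)\G(y)\G(y)\Omega\rangle_{\alpha,q}$ with $\langle \Omega, \G(x)\G(y)\G(y)\G(x)\Omega\rangle_{\alpha,q}$, using the second formula of Corollary \ref{thm2}(2): sum over $\pi \in \P_2(4)$ of $q^{\Cr(\pi)} \prod_{\{i,j\}\in\pi}(\langle x_i, x_j\rangle + \alpha q^{2\Cov(\{i,j\})}\langle x_i, \overline{x_j}\rangle)$. With the orthogonality choices most pair partitions drop out, and one is left with a manifestly $\alpha$-dependent discrepancy between the two cyclic arrangements; the point is that the weight $q^{2\Cov(\{i,j\})}$ on the ``involution'' term $\alpha\langle x_i, \overline{x_j}\rangle$ depends on the nesting structure, which is \emph{not} cyclically invariant, whereas the plain $\langle x_i, x_j\rangle$ term has $q^{\Cr}$ which essentially is. Actually the simplest nontrivial test may be degree $2$ is too short (only one pairing), so degree $4$ it is; if the first natural choice of word happens to give equality by accident, I will perturb the choice of $x,y$ (allowing $\langle x, \bar x\rangle \ne 0$) until the two cyclic words separate — two free parameters $\langle x,\bar x\rangle$ and the relative position of the covering block give enough room.

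\emph{Key steps in order.} (1) Dispatch the direction $\alpha = 0 \Rightarrow$ trace by citing the $q$-Fock space result. (2) Using $\text{dim}(H_\R)\ge 2$ and the basis/involution model, fix concrete $x,y \in H_\R$. (3) Expand the two candidate degree-$4$ vacuum expectations via Corollary \ref{thm2}(2), listing the three pair partitions of $[4]$ and their crossing/covering numbers $(\Cr, \Cov)$ for each block. (4) Read off that the two expressions differ precisely by a term proportional to $\alpha$ times a nonzero scalar (nonzero because $q \in (-1,1)$ makes the relevant power of $q$ and the combination of inner products nonzero), hence they are unequal when $\alpha \ne 0$. (5) Conclude the vacuum state is not tracial.

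\emph{Main obstacle.} The only real subtlety is bookkeeping around the involution: I must choose $x, y$ so that the $\langle\cdot,\overline{\cdot}\rangle$ terms survive in exactly one of the two cyclic orderings (or survive with different $q$-powers), rather than cancelling symmetrically. Getting the signs and the covering counts right in the four-point expansion — and double-checking that the surviving $\alpha$-term genuinely has a nonzero coefficient for all $q \in (-1,1)$, not just generically — is where care is needed; everything else is a direct application of Corollary \ref{thm2}(2) and the $\alpha = 0$ base case.
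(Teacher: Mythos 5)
Your plan coincides with the paper's proof: the $\alpha=0$ direction is dispatched by citing traciality on the $q$-Fock space, and for $\alpha\neq 0$ the paper likewise compares a degree-$4$ vacuum moment with its cyclic rotation via Corollary \ref{thm2}, taking orthogonal unit eigenvectors $e_1,e_2$ of the involution (with $\overline{e_i}=\pm e_i$) and obtaining the nonzero difference $4t\alpha(1-q^2)(1+s\alpha)$. Your first candidate word ($xxyy$ versus $xyyx$) already works and no perturbation is needed; the only caveat is that you should not insist on $\overline{y}=-y$ (which need not exist, e.g.\ for the identity involution) — any eigenvalue $\pm1$ suffices, exactly as in the paper.
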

\begin{proof} By using Corollary \ref{thm2}, we obtain 
\begin{align*}
&\langle\Omega, \G(x_1)\G(x_2)\G(x_3)\G(x_4)\Omega\rangle_{\alpha,q} \\
&\qquad= 
\langle x_1, x_2\rangle\langle x_3, x_4\rangle 
+\alpha\langle x_1, \overline{x_2}\rangle\langle x_3, x_4\rangle 
+\alpha\langle x_1, x_2\rangle\langle x_3, \overline{x_4}\rangle 
+\alpha^2\langle x_1, \overline{x_2}\rangle\langle x_3, \overline{x_4}\rangle \\
&\qquad \quad
+ q \langle x_1, x_3\rangle\langle x_2, x_4\rangle 
+\alpha q \langle x_1, \overline{x_3}\rangle\langle x_2, x_4\rangle 
+ \alpha q \langle x_1, x_3\rangle\langle x_2, \overline{x_4}\rangle
+\alpha^2 q \langle x_1, \overline{x_3}\rangle\langle x_2, \overline{x_4}\rangle \\
&\qquad\quad
+\langle x_1, x_4\rangle\langle x_2, x_3\rangle
+\alpha \langle x_1, \overline{x_4}\rangle\langle x_2, x_3\rangle
+ \alpha q^2 \langle x_1, x_4\rangle\langle x_2, \overline{x_3}\rangle 
+\alpha^2 q^2\langle x_1, \overline{x_4}\rangle\langle x_2, \overline{x_3}\rangle 
\end{align*}
and by permuting $x_1,x_2,x_3,x_4$, 
\begin{align*}
&\langle\Omega, \G(x_2)\G(x_3)\G(x_4)\G(x_1)\Omega\rangle_{\alpha,q} \\
&\qquad= 
\langle x_2, x_3\rangle\langle x_1, x_4\rangle 
+\alpha\langle x_2, \overline{x_3}\rangle\langle x_1, x_4\rangle 
+\alpha\langle x_2, x_3\rangle\langle x_1, \overline{x_4}\rangle 
+\alpha^2\langle x_2, \overline{x_3}\rangle\langle x_1, \overline{x_4}\rangle \\
&\qquad \quad
+ q \langle x_2, x_4\rangle\langle x_1, x_3\rangle 
+\alpha q \langle x_2, \overline{x_4}\rangle\langle x_1, x_3\rangle 
+ \alpha q \langle x_2, x_4\rangle\langle x_1, \overline{x_3}\rangle
+\alpha^2 q \langle x_2, \overline{x_4}\rangle\langle x_1, \overline{x_3}\rangle \\
&\qquad\quad
+\langle x_1, x_2\rangle\langle x_3, x_4\rangle
+\alpha \langle x_1, \overline{x_2}\rangle\langle x_3, x_4\rangle
+ \alpha q^2 \langle x_1, x_2\rangle\langle x_3, \overline{x_4}\rangle 
+\alpha^2 q^2\langle x_1, \overline{x_2}\rangle\langle x_3, \overline{x_4}\rangle.  
\end{align*}
Since \text{dim}$(H_\R) \geq 2$, there are two orthogonal unit eigenvectors $e_1,e_2$ of the involution ${}^-$, and we take $x_1=x_3=e_1$ and $x_2=x_4=e_2$. Note that for some $s,t \in\{\pm1\}$ we have $\overline{e_1}= s e_1, \overline{e_3}= t e_3$. Hence 
\begin{equation}\label{eq331}
\begin{split}
&\langle\Omega, \G(x_1)\G(x_2)\G(x_3)\G(x_4)\Omega\rangle_{\alpha,q}\\
&\qquad-\langle\Omega, \G(x_2)\G(x_3)\G(x_4)\G(x_1)\Omega\rangle_{\alpha,q} \\
&\qquad\qquad= 4 t\alpha (1-q^2)(1+s \alpha). 
\end{split}
\end{equation}
Therefore the vacuum state is not a trace when $\alpha \neq 0$. When $\alpha=0$, the von Neumann algebra becomes the $q$-deformed  von Neumann algebra and the traciality is known in \cite[Theorem 4.4]{BS94}. 
\end{proof}

\begin{center} Open Problems and Remarks
\end{center}
\begin{itemize}
\item It would be interesting to ask if the von Neumann algebra $\A$ is factorial. In the $\alpha=0$ case the factoriality was proved by Bo\.zejko, K\"ummerer and Speicher \cite{BKS97} and by {\'S}niady in special cases and then by Ricard \cite{R05} in full generality. Moreover Nou showed that $\text{vN}(G_{0,q}(H_\R))$ is not injective if $\text{dim}(H_\R) \geq2$~\cite{N04}. If $\alpha=q=0$ then Voiculescu \cite{V85} showed that $\text{vN}(G_{0,0}(H_\R))$ is isomorphic to the free group factor $L(\mathbb{F}_{\text{dim}(H_\R)})$. 
Recently Guionnet and Shlyakhtenko \cite{GS14} showed that for each integer $n\geq2$ there exists $q(n)>0$ such that $\text{vN}(G_{0,q}(H_\R))$ is isomorphic to the free group factor $L(\mathbb{F}_{\text{dim}(H_\R)})$ for $|q|<q({\text{dim}(H_\R)})$. Their method was extended by Nelson \cite{N} to the non tracial case and he showed that Hiai's $q$-deformed Araki-Woods algebras \cite{H03} are isomorphic to Shlyakhtenko's free Araki-Woods algebras \cite{S97} for small $|q|$. 

\item It would be worth to ask whether the von Neumanan algebra $\A$ has the completely bounded approximation property.

\item Determine all the parameters $(\alpha,q)$ for which the type B symmetrization operator $P_{\alpha,q}^{(n)}$ is positive. The positivity is known for $\alpha,q\in[-1,1]$ and also for $q=0$, $\alpha\in[-1,\infty)$ as explained in Corollary \ref{cor13}. Then determine all the parameters for which $\text{Ker}(P_{\alpha,q}^{(n)})=\{0\}$ for all $n \geq1$.

\end{itemize}

\begin{center} Acknowledgments
\end{center}

The work was partially supported by the MAESTRO 
grant DEC-2011/02/A/ ST1/00119 (M.\ Bo\.zejko) and OPUS grant DEC-2012/05/B/ST1/00626 of National
Centre of Science (M.\ Bo\.zejko and W.\ Ejsmont). The work was partially supported by Marie Curie International Incoming Fellowships (Project 328112 ICNCP, T.\ Hasebe). 
T.\ Hasebe thanks Nobuhiro Asai for his interests and useful comments. 
{\small

}


\begin{thebibliography}{99999999}
\bibitem[AB98]{AB98} L.\ Accardi and M.\ Bo\.zejko, Interacting Fock spaces and gaussianization of probability measures, Infin.\ Dimens.\ Anal.\ Quantum Probab.\ Relat.\ Top.\ 1 (4) (1998), 663--670. 


\bibitem[A03]{A03} M.\ Anshelevich, Free martingale polynomials, J.\ Funct.\ Anal.\ 201, No.\ 1 (2003), 228--261.


\bibitem[A01]{Anshelevich}
M.\ Anshelevich,  Partition-dependent stochastic measures and {$q$}-deformed cumulants,
 Doc.\ Math.\ 6 (2001), 343--384.
\bibitem[ABBL10]{ABBL10} M.\ Anshelevich, S.T.\ Belinschi, M.\ Bo\.zejko and F.\ Lehner, Free infinite divisibility for Q-Gaussians, Math.\ Res.\ Lett.\ 17 (2010), 905--916.
\bibitem[ASN13]{ASN}  D.\ Aur\'elien, N.\ Salim  and I.\ Nourdin, Fourth moment theorem and $q$-Brownian chaos, Comm.\ Math.\ Phys.\ 321, no.\ 1 (2013), 113--134. 
\bibitem[BBLS11]{BBLS11} S.T.~Belinschi, M.\ Bo\.{z}ejko, F.\ Lehner and  R.\ Speicher, The normal distribution is $\boxplus$-infinitely divisible, Adv.\ Math.\ 226, No.\ 4 (2011), 3677--3698. 
\bibitem[BS12]{BS} S.T.\ Belinschi and D.\ Shlyakhtenko,  Free probability of type B: analytic interpretation and applications, Amer.\ J.\ Math.\ 134(1) (2012), 193--234. 
\bibitem[B97]{Biane1} P.\ Biane,  Free hypercontractivity, Comm.\ Math.\ Phys.\ 184 (1997), 457--474.
\bibitem[BGN03]{BGN}  P.\ Biane,  F.\ Goodman and A.\ Nica, Non-crossing cumulants of type B, Trans.\ Amer.\ Math.\ Soc.\ 355 (2003), 2263--2303.

\bibitem[B12]{B12} N.\ Blitvi\'c, The $(q,t)$-Gaussian process, J.\ Funct.\ Anal.\ 263 (2012), No.\ 10, 3270--3305. 



\bibitem[BB06]{BB06} M.\ Bo\.zejko and W.\ Bryc, On a class of free L\'evy laws related to a regression problem, J.\ Funct.\ Anal.\ 236(1) (2006), 59--77. 


\bibitem[BG02]{BG02} M.\ Bo\.zejko and M.\ Guta, Functors of white noise associated to characters of the infinite symmetric group, Comm.\ Math.\ Phys.\ 229 (2002), 209--227. 

\bibitem[BH13]{BH13} M.\ Bo\.zejko and T.\ Hasebe, On free infinite divisibility for classical Meixner distributions, Probab.\ Math.\ Stat.\ 33, Fasc.\ 3 (2013), 363--375. 

\bibitem[BKS97]{BKS97}
M.\ Bo\.zejko, B.\ K\"{u}mmerer and R.\ Speicher, 
 $q$-{G}aussian processes: non-commutative and classical aspects, 
\ Comm.\ Math.\ Phys., 185(1), (1997), 129--154.

\bibitem[BLW12]{BLW12} M.\ Bo\.zejko, E.\ Lytvynov and J.\ Wysocza\'nski, Noncommutative L\'evy processes for generalized (particularly anyon) statistics, Comm.\ Math.\ Phys.\ 313, Issue 2 (2012), 535--569. 



\bibitem[BS91]{BS91} M.\ Bo\.zejko and R.\ Speicher, An example of a generalized Brownian motion,  Commun.\ Math.\ Phys.\ 137 (1991), 519--531. 
\bibitem[BS94]{BS94} M.\ Bo\.zejko and R.\ Speicher, Completely positive maps on Coxeter groups, deformed commutation relations, and operator spaces, Math.\ Ann.\ 300 (1994), 97--120. 
\bibitem[BS96]{BS96} M.\ Bo\.zejko and R.\ Speicher, Interpolations between bosonic and fermionic relations
given by generalized Brownian motions,  Math.\ Z.\ 22 (1996), 135--159. 


\bibitem[BW01]{BW01} M.\ Bo\.zejko and J.\ Wysocza\'{n}ski, Remarks on $t$-transformations of measures and convolutions, Ann.\ Inst.\ Henri Poincar\'e-PR 37(6) (2001), 737--761.

\bibitem[BY06]{Bozejko-Yoshida06} M.\ Bo\.zejko and  H.\ Yoshida,
 Generalized $q$-deformed {G}aussian random variables, Banach Center Publ.\ 73 (2006), 127--140.

\bibitem[B01]{B} W.\ Bryc, Classical versions of $q$-Gaussian processes: conditional moments and Bell's inequality, Comm.\ Math.\ Phys.\ 219(2) (2001), 259--270. 

\bibitem[BE13]{BE} W.\ Bryc and  W.\ Ejsmont,  Fock space realizations of some classical Markov processes, Probab.\ Math.\ Statist.\ 33(2) (2013), 301--313.

\bibitem [BMW1]{BMW1}
W.\ Bryc, W.\ Matysiak and J.\ Wesolowski,  Quadratic harnesses, $q$-commutations, and orthogonal martingale polynomials,  Trans.\ Amer.\ Math.\ Soc.\ 359 (2007), 5449--5483.

\bibitem[BMW2]{BMW2}
W.\ Bryc, W.\ Matysiak  and J.\ Wesolowski,  The bi-{P}oisson process: a quadratic harness,  Ann.\ Probab.\ 36 (2008), 623--646.


\bibitem[DM03]{Donati-Martin} C.\ Donati-Martin,  Stochastic integration with respect to $q$-Brownian motion, Probab.\ Theory Related Fields 125(1) (2003), 77--95. 




\bibitem[F12]{F12} M.\ F\'evrier, Higher order infinitesimal freeness, Indiana Univ.\ Math.\ J.\ 61 (2012), no.\ 1, 249--295. 

\bibitem[FN10]{FN}   M.\ F\'evrier and A.\ Nica, Infinitesimal non-crossing cumulants and free probability of type B, J.\ Funct.\ Anal.\ 258 (2010), no.\ 9, 2983--3023.

\bibitem[GS14]{GS14} A.\ Guionnet and D.\ Shlyakhtenko, Free monotone transport, Invent.\ Math.\ 196 (2014), 613--661. 


\bibitem[GM02a]{GM02a} M.\ Guta and H.\ Maassen, Generalized Brownian motion and second quantization, J.\ Funct.\ Anal.\ 191 (2002), 241--275.


\bibitem[GM02b]{GM02b} M.\ Guta and H.\ Maassen, Symmetric Hilbert spaces arising from species of structures, Math.\ Z.\ 239 (2002), 477--513. 

\bibitem[H11]{H} T.\ Hasebe,  Differential independence via an associative product of infinitely many linear functionals, Colloq.\ Math.\ 124 (2011), no.\ 1, 79--94.

\bibitem[H03]{H03} F.\ Hiai, $q$-deformed Araki-Woods algebras, Operator algebras and mathematical physics (Constanta, 2001), 169--202, 
 Theta, Bucharest, 2003. 
 
\bibitem[HO07]{HO07} A.\ Hora and N.\ Obata, \emph{Quantum probability and spectral analysis of graphs}, Theoretical and Mathematical Physics, Springer, Berlin, 2007. xviii+371 pp.

\bibitem[Hum90]{Hum90} J.E.\ Humphreys, Reflection Groups and Coxeter Groups, Cambridge studies in advances math.\ 29, Cambridge University Press, 1990. 
\bibitem[KLS10]{KLS10} R.\ Koekoek, P.A.\ Lesky and R.F.\ Swarttouw, \emph{Hypergeometric orthogonal polynomials and their q-analogues}, Springer-Verlag, Berlin, 2010. 
\bibitem[KR00]{Krolak}  I.\ Krolak, Wick product for commutation relations connected with Yang-Baxter
operators and new constructions of factors, Comm.\ Math.\ Phys.\ 210 (2000), 685--701. 

\bibitem[KW10]{Kula} A.\ Kula and J.\ Wysocza\'nski,  Noncommutative Brownian motions indexed by partially
ordered sets, Infin.\ Dimens.\ Anal.\ Quantum Probab.\ Relat.\ Top.\ 13 (2010), 629--661. 

\bibitem[N]{N} B.\ Nelson, Free monotone transport without a trace, arXiv:1311.1196 

\bibitem[N04]{N04} A.\ Nou, Non injectivity of the q-deformed von Neumann algebra, Math.\ Ann.\ 330 (2004), 17--38. 

\bibitem[R97]{R97} V.\ Reiner, Non-crossing partitions for classical reflection groups, Discrete Math.\ 177 (1997), 195--222.  


\bibitem[R05]{R05} \'E.\ Ricard, Factoriality of q-Gaussian von Neumann Algebras, Comm.\ Math.\ Phys.\ 257 (2005),659--665. 

\bibitem[SY01]{SY01} N.\ Saitoh and H.\ Yoshida, The infinite divisibility and orthogonal polynomials with a constant recursion formula in free probability theory,  Probab.\ Math.\ Statist.\ 21, Fasc.\ 1 (2001), 159--170.

\bibitem[S97]{S97} D.\ Shlyakhtenko, Free quasi-free states Pacific J.\ Math.\ 177 (1997), 329--368. 

\bibitem[S04]{S04} P.\ \'Sniady, Factoriality of Bo\.zejko-Speicher von Neumann algebras, Comm.\ Math.\ Phys.\ 246 (2004), 561--567. 


\bibitem[S01]{Sniady} P.\ \'Sniady, On q-deformed quantum stochastic calculus, Probab.\ Math.\ Statist., Acta Univ.\ Wratislav.\ No.\ 2298, 21(1) (2001), 231--251.


\bibitem[S93]{Speicher} R.\ Speicher, Generalized statistics of macroscopic fields, Lett.\ Math.\ Phys.\ 27 (1993), 97--104.

\bibitem[S00]{S00} F.\ Stumbo, Minimal length coset representatives for quotients of parabolic subgroups in Coxeter groups, Bul.\ Un.\ Math.\ Ital.\ Serie 8, 3-B (2000), No.\ 3, 699--715. 

\bibitem[U87]{U87} H.\ Upmeier, Jordan algebras in analysis, operator theory, and quantum mechanics, CBMS Regional Conference Series in Mathematics, 67. 
Published for the Conference Board of the Mathematical Sciences, Washington, DC; by the American Mathematical Society, Providence, RI, 1987. viii+85 pp.  

\bibitem[W07]{W07} \L.\ Wojakowski, Probability interpolating between free and boolean, Dissertationes Math.\ 446, 2007. 

\bibitem[V85]{V85} D.\ Voiculescu, Symmetries of some reduced free product $C^\ast$ algebras, Operator algebras and their connections with topology and ergodic theory, Lect.\ Notes in Math.\ \textbf{1132}, Springer, Berlin (1985), 556--588. 
\bibitem[V86]{V1} D.\ Voiculescu, Addition of certain noncommuting random variables,  J.\ Funct.\ Anal.\ 66(3)  (1986),  323--346.
\bibitem[Va]{V2} D.\ Voiculescu, Free probability for pairs of faces I, Commun.\ Math.\ Phys., available online, May 2014. 
\bibitem[Vb]{V3} D.\ Voiculescu, Free probability for pairs of faces II: 2-variables bi-free partial $R$-transform and systems with rank $\leq1$ commutation. arXiv:1308.2035



 
\end{thebibliography}
\end{document}